\newtheorem{theorem}{Theorem}
\newtheorem{lemma}[theorem]{Lemma}
\newtheorem{corollary}[theorem]{Corollary}
\newtheorem{conjecture}[theorem]{Conjecture}
\newtheorem{claim}[theorem]{Claim}
\newtheorem{remark}[theorem]{Remark}
\numberwithin{theorem}{section}
\begin{document}
\title{\bf List 3-dynamic coloring of graphs with small maximum average degree}

\author{Seog-Jin Kim \thanks{This research was supported by Basic Science Research Program through the National Research Foundation of Korea(NRF) funded by the Ministry of Education(NRF-2015R1D1A1A01057008) (S.-J.  Kim)}\\ \\
\small Department of Mathematics Education\\[-0.8ex]
\small Konkuk University\\[-0.8ex]
\small Seoul, Korea \\
\small\tt skim12@konkuk.ac.kr \\
\and
Boram Park\thanks{This research was supported by Basic Science Research Program through the National Research Foundation of Korea(NRF) funded by the Ministry of  Science, ICT and Future Planning(2015R1C1A1A01053495) (B. Park)}\thanks{Corresponding author: borampark@ajou.ac.kr} \\ \\
\small Department of Mathematics\\[-0.8ex]
\small Ajou University\\[-0.8ex]
\small Suwon, Korea\\
\small\tt borampark@ajou.ac.kr
}
\maketitle
\begin{abstract}
An $r$-dynamic $k$-coloring of a graph $G$ is a proper $k$-coloring such that for any vertex $v$, there are at least $\min\{r,\deg_G(v) \}$ distinct colors in $N_G(v)$. The {\em $r$-dynamic chromatic number} $\chi_r^d(G)$ of a graph $G$ is the least $k$ such that there exists an $r$-dynamic $k$-coloring  of $G$.  The {\em list $r$-dynamic chromatic number} of a graph $G$ is denoted by $ch_r^d(G)$.

Recently,
Loeb et al. \cite{UI} showed that the list $3$-dynamic chromatic number of a planar graph is at most 10.
And Cheng et al. \cite{Lai-16} studied the maximum average condition to have $\chi_3^d (G) \leq 4, \ 5$, or $6$.  On the other hand,
Song et al.~\cite{SLW} showed that if $G$ is planar with girth at least 6, then $\chi_r^d(G)\le r+5$ for any $r\ge 3$.

In this paper, we study list 3-dynamic coloring in terms of maximum average degree.  We show that  $ch^d_3(G) \leq 6$ if   $mad(G) < \frac{18}{7}$,   $ch^d_3(G) \leq 7$ if  $mad(G) < \frac{14}{5}$, and $ch^d_3(G) \leq 8$ if $mad(G) < 3$.   All of the bounds are tight.
\end{abstract}

\section{Introduction}
Let $k$ be a positive integer.
A proper $k$-coloring $\phi: V(G) \rightarrow \{1, 2, \ldots, k \}$ of a graph $G$ is an assignment of colors to the vertices of $G$ so that any two adjacent vertices  receive distinct colors.
The {\em chromatic number} $\chi(G)$ of a graph $G$ is the least $k$ such that there exists a proper $k$-coloring of $G$.
An $r$-dynamic $k$-coloring of a graph $G$ is a proper $k$-coloring $\phi$  such that  for each vertex $v\in V(G)$,
either the number of distinct colors in its neighborhood is at least $r$ or the colors in its neighborhood are all distinct, that is, $|\phi(N_{G}(v))|=\min\{r,\deg_G(v)\}$.
The {\em $r$-dynamic chromatic number} $\chi_r^d(G)$ of a graph $G$ is the least $k$ such that there exists an $r$-dynamic $k$-coloring  of $G$.


A {\em list assignment} on a graph $G$ is a function
$L$ that assigns each vertex $v$ a set $L(v)$ which is
a list of available colors at $v$.
For a list assignment $L$ of a graph $G$, we say $G$ is {\em $L$-colorable}
if there exists a proper coloring $\phi$ such that $\phi(v) \in L(v)$ for every $v \in V(G)$.
A graph $G$ is said to be {\em $k$-choosable} if for any list assignment $L$ such that
$|L(v)| \geq k$ for every vertex $v$, $G$ is $L$-colorable.

For a list assignment $L$ of $G$, we say that $G$ is {\em $r$-dynamically $L$-colorable}
if there exists an  $r$-dynamic coloring  $\phi$ such that $\phi(v) \in L(v)$ for every $v \in V(G)$.
A graph $G$ is {\em  $r$-dynamically $k$-choosable} if for any list assignment $L$ with
$|L(v)| \geq k$ for every vertex $v$, $G$ is $r$-dynamically $L$-colorable.
The  {\em list $r$-dynamic chromatic number}  $ch_r^d(G)$ of a graph $G$ is the least $k$ such that $G$ is  $r$-dynamically $k$-choosable.

The notion of $r$-dynamic coloring was firstly introduced in \cite{Montgomery}, and then it was widely studied in \cite{AGS,Esperet,Sogol, Ross-Kang,KimOk,KimLeePark,KimPark}.
Note that it was also studied in \cite{Lai-16, SFCSL, SLW} with the name of {\em $r$-hued coloring}.
Similar to the Wegner's conjecture \cite{Wegner}, a conjecture about dynamic coloring of planar graphs was proposed in \cite{SFCSL}.
\begin{conjecture} \label{conj-r-dynamic}
Let $G$ be a planar graph. Then
\[
\chi_r^d(G) \le \begin{cases}
  r+3 & \text{if }1\le r \le2 \\
   r+5& \text{if }3\le r \le7 \\
  \lfloor\frac{3r}{2}\rfloor +1 & \text{if }  r \ge8. \\
\end{cases}\]
\end{conjecture}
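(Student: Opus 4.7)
My plan is to attack the conjecture by cases on $r$, because the three regimes $r\in\{1,2\}$, $3\le r\le 7$, and $r\ge 8$ appear to require fundamentally different techniques. For $r=1$, the dynamic condition reduces to ordinary properness (having at least one distinct color in the neighborhood is automatic for proper colorings of non-isolated vertices), so the Four Color Theorem yields $\chi_1^d(G)\le 4 = r+3$. For $r=2$, I would appeal to the known classical dynamic-coloring result $\chi_2^d(G)\le 5$ for planar graphs, whose proof proceeds by discharging, exploiting a low-degree vertex in any minimum counterexample with enough palette slack for the coloring to be extended.

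For $3\le r\le 7$, my strategy is a discharging argument applied to a minimum counterexample $G$ (minimal with respect to $|V(G)|+|E(G)|$). First I would establish a catalog of reducible configurations: typical entries would be a vertex $v$ with $\deg_G(v)$ below a threshold depending on $r$ whose deletion admits extension of any $(r+5)$-coloring of $G-v$, together with configurations involving adjacent pairs of low-degree vertices or short cycles whose incident vertices are all of bounded degree. Then, assigning initial charges $\deg_G(v)-4$ on vertices and $\deg_G(f)-4$ on faces and invoking Euler's formula, I would design rules sending charge from large faces and high-degree vertices toward low-degree vertices, forcing every element to finish nonnegative and contradicting the negativity of the total charge.

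For $r\ge 8$, the tight bound $\lfloor 3r/2\rfloor+1$ suggests exploiting the observation that every $v$ with $\deg_G(v)\le r$ must carry pairwise distinct colors on $N_G(v)$, so the local problem reduces to square-coloring within balls of radius $1$ around low-degree vertices. I would combine this with the linear bound on edges of planar graphs, a sparse-graph inequality for $G^2$ restricted to the low-degree part, and a weighted discharging that separately tracks the ``high-degree skeleton'' of vertices of degree exceeding $r$, where a greedy argument using the palette size $\lfloor 3r/2\rfloor+1$ should close.

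The main obstacle, in my view, is the middle range $3\le r\le 7$. The result of Song et al.\ already gives $\chi_r^d(G)\le r+5$ under girth at least $6$, so the real difficulty is removing the girth hypothesis: triangles and $4$-faces impose severe palette constraints on their shared neighborhoods under the $r$-dynamic condition, and reducibility for triangle-containing configurations typically requires auxiliary choosability tools (Alon--Tarsi polynomial method, or Hall-type arguments to construct list-extensions) whose direct application here seems delicate. Matching the construction side, namely exhibiting planar graphs with $\chi_r^d = r+5$, would also be needed to certify tightness and guide the discharging design.
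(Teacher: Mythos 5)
The statement you are trying to prove is Conjecture~\ref{conj-r-dynamic}, which the paper does not prove and does not claim to prove: it is an open conjecture attributed to Song, Fan, Chen, Sun, and Lai, and the paper's actual contributions are partial results for the single case $r=3$ under maximum-average-degree (equivalently, girth) restrictions. So there is no ``paper's own proof'' to compare against, and your proposal must be judged as a standalone attempt. The parts of your plan that work are exactly the parts that were already known before this paper: $r=1$ reduces to proper coloring and the Four Color Theorem, and $r=2$ is the theorem of Kim, Lee, and Park cited as \cite{KimLeePark}. Everything beyond that is a sketch of intent rather than a proof, and the intent runs into known hard obstructions.

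Concretely, two gaps are fatal. First, for $r\ge 8$ note that whenever $r\ge\Delta(G)$ the condition $|\phi(N_G(v))|=\min\{r,\deg_G(v)\}=\deg_G(v)$ forces all neighborhoods to be rainbow, so $\chi_r^d(G)=\chi(G^2)$; taking $r=\Delta(G)\ge 8$ shows that this case of the conjecture contains Wegner's conjecture for planar graphs, which has been open since 1977. Your proposed ``greedy argument using the palette size $\lfloor 3r/2\rfloor+1$'' cannot close this: greedy and naive discharging give bounds of the form roughly $\frac{9}{5}\Delta$ or, with much heavier machinery, $\frac{3}{2}\Delta(1+o(1))$, but the exact constant $\lfloor 3\Delta/2\rfloor+1$ is precisely what nobody can reach. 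Second, for $3\le r\le 7$ you correctly identify that the difficulty is removing the girth hypothesis from Theorem~\ref{SLW-thm}, but you offer no reducible configurations and no discharging rules; the state of the art even for $r=3$ on general planar graphs is the bound $10$ of Loeb et al.\ \cite{UI}, which exceeds the conjectured $r+5=8$, so the gap is not a matter of writing out routine details. A proposal whose hard cases are ``design a catalog of reducible configurations'' and ``design rules forcing every element nonnegative'' defers the entire mathematical content; as it stands, nothing in the middle or upper range is proved.
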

Song, Lai, and Wu \cite{SLW} show that Conjecture \ref{conj-r-dynamic} is true for  planar graphs with girth at least 6.
\begin{theorem}[\cite{SLW}]\label{SLW-thm}
If $G$ is a planar graph with girth at least 6, $\chi_r^d(G)\le r+5$ for any $r\ge 3$.
\end{theorem}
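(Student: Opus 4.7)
The plan is to prove the theorem by contradiction via a minimum counterexample together with the discharging method. Let $G$ be a planar graph of girth at least $6$ with $\chi_r^d(G) \ge r+6$, chosen with the minimum number of vertices among all such counterexamples. The argument has three stages: an Euler-formula charge deficit, a list of reducible configurations $G$ cannot contain, and a discharging redistribution forcing $G$ to contain one of them.

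First, from Euler's formula and girth at least $6$, a planar graph satisfies $|E(G)| \le \tfrac{3}{2}(|V(G)|-2)$, which gives
\[
\sum_{v \in V(G)} \bigl(d(v) - 3\bigr) \le -6 < 0.
\]
This strict negativity is the deficit that the discharging step must contradict.

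Second, I would prove reducibility of several small-degree configurations. The easy one is that $G$ has no vertex of degree at most $1$: color $G$ minus such a vertex inductively, then extend greedily. The harder reducible configurations involve a vertex $v$ of modest degree $d$ whose neighbors have small degree. Here one deletes $v$, applies induction to obtain an $r$-dynamic $(r+5)$-coloring of $G-v$, and tries to extend. The extension must avoid the $d$ colors on $N(v)$ (properness) and, for every neighbor $u$ with $d_{G-v}(u) < r$, the $r$-dynamic condition at $u$ forces $\phi(v)$ to avoid the colors already appearing on $N_{G-v}(u)$. With $r+5$ colors available, a careful counting shows that extension succeeds whenever the local degree profile around $v$ is small enough; when a direct extension fails, a short recoloring or Kempe-swap argument along a path out of $v$ completes the job, the girth-$6$ hypothesis ensuring such a path exists without short cycles that would create color clashes.

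Third, set $\mu(v) := d(v) - 3$ and design discharging rules so that vertices of degree at least $3$ donate charge to their low-degree neighbors (degree-$2$ vertices and, if needed, degree-$3$ vertices whose neighborhoods are particularly constrained by the dynamic condition). Using the reducibility lemmas from the previous step, one verifies that every vertex ends with non-negative final charge $\mu^{*}(v) \ge 0$, contradicting $\sum_v \mu(v) < 0$. The main obstacle will be proving the reducibility lemmas, because the $r$-dynamic constraint couples the colors across distance-$2$ neighborhoods: deleting $v$ and restoring it can interfere with the dynamic counts of \emph{other} vertices sharing a neighbor with $v$. The girth-$6$ assumption is precisely what allows the two shells around $v$ to be analyzed essentially independently, since no short cycle forces a repeated color in the second shell; converting this intuition into clean extension/swap arguments for each forbidden configuration is the technical heart of the proof.
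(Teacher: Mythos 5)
First, a point of orientation: the paper does not prove this statement at all --- it is quoted from Song, Lai, and Wu \cite{SLW}, and the authors only observe later that their Theorem~\ref{thm:main8} recovers the case $r=3$ (via the fact that a planar graph of girth at least $6$ has $mad(G)<3$). So there is no in-paper proof to compare against, and your proposal must stand on its own.

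As it stands, it does not: it is a template for a discharging proof rather than a proof. The Euler-formula deficit $\sum_v(d(v)-3)\le -6$ is correct, and discharging is indeed the right framework (it is what \cite{SLW} uses), but everything that makes such a proof work is missing. You never identify the reducible configurations, never specify the discharging rules, and never verify the final charges, so the contradiction is never actually reached. More seriously, the reducibility step you sketch would fail as described. If you delete a vertex $v$ and extend an $r$-dynamic $(r+5)$-coloring of $G-v$, the color of $v$ must avoid not only the colors on $N(v)$ but also, for each neighbor $u$ whose neighborhood is not yet rainbow enough, essentially all colors already on $N_{G-v}(u)$; this forbidden set can far exceed $r+5$ even for a degree-$2$ vertex $v$ with high-degree neighbors. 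This is why the present paper never deletes a single vertex in its reducibility lemmas: it deletes carefully chosen sets $S$, passes to $G^2[S]$, and invokes bespoke list-coloring lemmas (Remark~\ref{basic-lemma}) with exact counts of surviving list sizes. Your fallback of ``a short recoloring or Kempe-swap argument along a path'' is not available here either: swapping colors along a Kempe chain in a dynamic coloring can destroy the condition $|\phi(N(w))|\ge\min\{r,\deg(w)\}$ at vertices adjacent to the chain, so such swaps require their own justification, which you do not supply. The technical heart of the theorem --- choosing the configurations, proving each one reducible under the distance-two constraints, and balancing the charges --- is precisely what is absent.
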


Recently, 3-dynamic coloring has been concerned.  Loeb,  Mahoney,  Reiniger, and  Wise \cite{UI} showed that $ch_3^d (G) \leq 10$ if $G$ is a planar graph.
On the other hand, list 3-dynamic coloring was studied in \cite{Lai-16} in terms of maximum average degree, where the {\em maximum average degree} of a graph $G$, $mad(G)$, is the maximum among the average degrees of the subgraphs of $G$. It was showed in \cite{Lai-16} that
$\chi_3^d  (G) \leq 6$ if $mad(G) < \frac{12}{5}$, $\chi_3^d (G) \leq 5$ if $mad(G) < \frac{7}{3}$, and $\chi_3^d  (G) \leq 4$ if $G$ has no $C_5$-component and $mad(G) < \frac{8}{3}$.

In this paper, we study list $3$-dynamic coloring with maximum average degree condition.
For each $k \in \{6, 7, 8\}$, we study the optimal value of maximum average degree to be $ch_3^d(G) \leq k$.
First,  we give an optimal value of $mad(G)$ to be $ch_3^d(G) \leq 6$, which improves a result in \cite{Lai-16}.
\begin{theorem}\label{thm:main}
If $mad(G)<\frac{18}{7}$, then $ch_3^d(G)\le 6$.
\end{theorem}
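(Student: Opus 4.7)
The plan is a discharging argument applied to a minimum counterexample. Let $G$ be a graph with $mad(G)<\frac{18}{7}$ equipped with a list assignment $L$ satisfying $|L(v)|\ge 6$ for every $v$ such that $G$ admits no 3-dynamic $L$-coloring, chosen so that $|V(G)|+|E(G)|$ is minimum. The first half of the proof would establish a catalogue of \emph{reducible configurations}, i.e., small subgraphs that cannot appear in $G$. Natural candidates are (R1) any vertex of degree at most $1$; (R2) two adjacent $2$-vertices; (R3) longer ``threads'' of consecutive $2$-vertices between higher-degree endpoints; (R4) a $3$-vertex incident to too many $2$-vertices; and (R5) analogous bounds on the number of $2$-neighbors permitted at vertices of degree $4$ or $5$. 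For each configuration one constructs a smaller graph $G'$ (by deleting vertices, contracting edges, or identifying pairs of non-adjacent vertices), together with an appropriately modified list assignment $L'$, obtains a 3-dynamic $L'$-coloring by minimality, and extends it to a 3-dynamic $L$-coloring of $G$ by checking that at each reintroduced vertex the number of colors forbidden by the proper-coloring and 3-dynamic constraints is strictly less than $6$.

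In the second half I would assign each vertex the initial charge $\mu(v)=\deg_G(v)-\frac{18}{7}$, so that $\sum_{v\in V(G)}\mu(v)<0$ follows from the $mad$ hypothesis applied to $G$ itself. A $2$-vertex starts with $-\frac{4}{7}$, a $3$-vertex with $\frac{3}{7}$, and a $d$-vertex ($d\ge 4$) with at least $\frac{10}{7}$. A natural first attempt at discharging rules is: every vertex of degree $\ge 3$ sends $\frac{2}{7}$ to each adjacent $2$-vertex, with a larger donation ($\frac{a}{7}$ for some $a>2$) from vertices of degree $\ge 4$, the constants being tuned so that the absence of (R2)--(R5) forces every vertex to end with nonnegative charge. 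This yields a contradiction with $\sum_v \mu(v)<0$.

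The main obstacle is the reducibility of configurations involving $2$-vertices. When a $2$-vertex $v$ with neighbors $u_1,u_2$ is removed, the degrees of $u_1,u_2$ in $G-v$ are one smaller than in $G$, so the 3-dynamic condition guaranteed at them in the coloring of $G-v$ is \emph{weaker} than the one required in $G$; moreover, to satisfy the dynamic condition at $v$ itself one needs $\phi(u_1)\ne\phi(u_2)$, which is not free when $u_1u_2\notin E(G)$. The usual remedies are to identify $u_1$ and $u_2$ into a single vertex (enforcing distinct colors), or to delete $v$ and shrink the list of some remaining vertex to record the strengthened dynamic constraints at $u_1$ and $u_2$, and then argue that a list of size $6$ at $v$ always admits a valid color after the forbidden ones have been counted. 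Getting these counts to balance precisely at the threshold $\frac{18}{7}$ (rather than a larger value of $mad$) is what drives the exact choice of reducible configurations and discharging constants, and is the most delicate part of the argument; the stated tightness of the bound is what forces all of these constants to be pushed to their extreme values.
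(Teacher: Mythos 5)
Your overall architecture (minimal counterexample, reducible configurations, discharging from the degree charge) matches the paper's, but the catalogue of configurations you propose is not the right one, and the step you defer --- ``the constants being tuned so that the absence of (R2)--(R5) forces every vertex to end with nonnegative charge'' --- is exactly where the argument cannot be completed as described. The problematic vertices are the $3$-vertices: one starts with charge $3-\frac{18}{7}=\frac{3}{7}$ and must pay $\frac{2}{7}$ to each $2$-neighbor, so it goes negative as soon as it has two or three of them. Your remedy (R4), forbidding a $3$-vertex with ``too many'' $2$-neighbors, does not work at this threshold: a $3$-vertex $x$ with three $2$-neighbors is \emph{not} reducible with lists of size $6$. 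Deleting $S=\{x,v_1,v_2,v_3\}$ leaves $G^2[S]\cong K_4$ with residual lists of size only $3$ at every vertex of $S$ (each $v_i$ loses up to three colors to the properness and dynamic constraints at its outer neighbor $u_i$, and $x$ loses $\phi(u_1),\phi(u_2),\phi(u_3)$), and $K_4$ is not colorable from lists of size $3$; the reduction succeeds only when some $u_i$ is a $4^+$-vertex, in which case $v_i$ retains $5$ colors. So such vertices genuinely survive in a minimal counterexample and must be paid for rather than excluded.

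The paper's resolution, absent from your plan, is to classify $3$-vertices by their number $i$ of $2$-neighbors (the sets $W_i$) and to prove structural lemmas about \emph{weak adjacency} (sharing a common $2$-neighbor): every $W_3$-vertex has three distinct weak neighbors all lying in $W_1$; no two $W_2$-vertices are adjacent; no $W_1$-vertex has two $W_2$-neighbors; and no $3$-vertex simultaneously has a $W_1$-neighbor, a $W_2$-neighbor, and a $W_3$ weak neighbor. The discharging then includes transfers of $\frac{1}{7}$ to $W_3$-vertices from their weak neighbors, to $W_2$-vertices from their unique $3^+$-neighbor, and --- crucially, a second-order rule --- from $W_0$-vertices to certain squeezed $W_1$-neighbors. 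Without this second-neighborhood structure the charges cannot balance at $\frac{18}{7}$. By contrast, your (R3) is subsumed by (R2), and (R5) is unnecessary, since a $4$-vertex has charge $\frac{10}{7}$ and can afford $\frac{2}{7}$ to all four neighbors. Finally, the reducibility arguments themselves require more than counting forbidden colors below $6$: several configurations reduce to list-coloring a specific graph on $G^2[S]$ (such as $K_4$ minus an edge with lists of sizes $2,3,2,2$), which must be verified separately, as the paper does in its Remark on $f$-choosable gadgets.
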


The bound on  $mad(G)$ in Theorem~\ref{thm:main} is tight.
The graph $H$ in Figure~\ref{fig:tight} is a subcubic graph and so $ch_3^d(H)=ch(H^2)$, where the \textit{square} of $H$, denoted by $H^2$, is the graph obtained by adding to $H$ the edges connecting two vertices having a common neighbor in $H$. Note that $mad(H)=\frac{18}{7}$ and $H^2$ is isomorphic to $K_7$.
Hence we have $ch(H^2)=ch_3^d(H)=7$, which implies that the bound on $mad(G)$ in Theorem \ref{thm:main} is tight.

From the graph $H$, one can find infinitely many tight examples for Theorem \ref{thm:main}.
Given a graph $H'$ with $mad(H')\le \frac{18}{7}$ and $ch_3^d (H') \leq 7$,
let $G$ be a graph obtained from the union of two graphs $H$ and $H'$ by connecting
with internally disjoints paths of length at least five such that the end vertices in $H$ of the paths have degree two. Figure~\ref{fig:tight-many} shows a way to construct such graphs.  Note that there are at most three such paths since $H$ has three vertices of degree two.

\medskip
\noindent {\bf Remark:}
For a given graph $F$ with $mad(F) \le \frac{18}{7}$,
let $F'$ be a graph obtained by adding a path of length $\ell$ ($\ell\ge 5$) to $F$ such that the two end vertices $x$ and $y$ are in $F$ and the other internal vertices are not.  We will show that $mad(F') \le \frac{18}{7}$.

For a graph $G$, let $\rho_G$ be a function defined on the power set of $V(G)$ such that $\rho_G(A)=9|A|-7|E(G[A])|$ for any $A\subset V(G)$, {where $|A|$ denotes the number of vertices in $A$ and $|E(G[A])|$ denotes the number of edges in the subgraph induced by $A$.
Note that} $\rho_G(A)\ge 0$ for any $A\subset V(G)$ if and only if $mad(G) \le \frac{18}{7}$.

Take any subset $A'\subset V(F')$.
If $\{x,y\}\subset  A'$, then
\[\rho_{F'}(A') \geq \rho_{F'}(A'\cap V(F))+ 9(\ell-1)-7(\ell) \ge \rho_{F}(A'\cap V(F)) +2\ell-9 >\rho_{F}(A'\cap V(F))\ge 0, \]
since $\ell \geq 5$.
If  $\{x,y\}\not \subset  A'$, then
\[\rho_{F'}(A')=\rho_{F'}(A'\cap V(F))+ 9|A'-V(F)|-7|A'-V(F)| \ge \rho_{F}(A'\cap V(F))\ge0 .\]
Therefore,  $mad(F') \le \frac{18}{7}$.

\medskip

Thus, from the above Remark, it follows that $mad(G) = mad(H)=\frac{18}{7}$.
Since $\deg_G(v)=3$ for any $v\in V(H)$ and the distance (in $G$) between two vertices in $V(H)$ is at most two, all seven vertices in $V(H)$ should get distinct colors in a 3-dynamic coloring of $G$ and so $ch_3^d(G) = \chi_3^d(G) = 7$.


\begin{figure}[h!]
  \centering
\begin{tikzpicture}[thick,scale=0.9]
        \path(-1,0) coordinate (0)    (0,0) coordinate (3)  (1,0) coordinate (4)
        (0,-1) coordinate (5)    (0,1) coordinate (1)  (1,1) coordinate (2)
        (1,-1) coordinate (6);
        \path (0) edge (1) edge(4)edge(5); \path (2)   edge(1);\path (6) edge(5) edge (2);
        \fill  (0,-2) node[above]{\small $H$};
        \fill (1) circle (3pt) (2) circle (3pt) (3) circle (3pt) (4) circle (3pt)(5) circle (3pt) (6) circle (3pt)(0) circle (3pt);
        \path (2) edge[bend left]  node[above] {} (6) ;
    \end{tikzpicture}
\caption{A tight example for Theorem~\ref{thm:main}, $mad(H)=\frac{18}{7}$ and $ch^d_3(H)=7$}\label{fig:tight}
\end{figure}
\begin{figure}[h!]
 \centering
 \begin{tikzpicture}[thick,scale=0.9]
        \path(-1,0) coordinate (0)    (0,0) coordinate (3)  (1,0) coordinate (4)
        (0,-1) coordinate (5)    (0,1) coordinate (1)  (1,1) coordinate (2)
        (1,-1) coordinate (6);
        \path (0) edge (1) edge(4)edge(5); \path (2)   edge(1);\path (6) edge(5) edge (2);
        \fill (1) circle (3pt) (2) circle (3pt) (3) circle (3pt) (4) circle (3pt)(5) circle (3pt) (6) circle (3pt)(0) circle (3pt);
        \path (2) edge[bend left]  node[above] {} (6) ;
   \path (2.3,1.1) edge (5,1.1); \path (2.3,-1.1)   edge(5,-1.1);
   \path (2.3,0) edge(5,0);
    \fill (2.3,1.1) circle (3pt) (2.9,1.1) circle (3pt) (3.5,1.1) circle (3pt) (4.1,1.1) circle (3pt)
 (2.3,-1.1) circle (3pt) (2.9,-1.1) circle (3pt) (3.5,-1.1) circle (3pt) (4.1,-1.1) circle (3pt)
(2.3,0) circle (3pt) (2.9,0) circle (3pt) (3.5,0) circle (3pt) (4.1,0) circle (3pt);
 \draw [-,bend right] (1) to (2.3,1.1);
 \draw [-,bend right] (3) to (2.3,0);
 \draw [-,bend right] (5) to (2.3,-1.1);
\draw[dotted]  (0.3,0) circle (1.7);
\draw[dotted]  (6.3,0) circle (2);
\fill  (0.3,1.7) node[above] {\small $H$};
\fill  (6.4,0) node[above] {\small$H'$} (6.5,0) node[below] {\footnotesize($mad(H')\le \frac{18}{7}$)};
 \end{tikzpicture}
     \caption{Construction of a large tight example $G$ for Theorem~\ref{thm:main}, $mad(G)=\frac{18}{7}$ and $ch^d_3(G) = 7$}\label{fig:tight-many}
\end{figure}
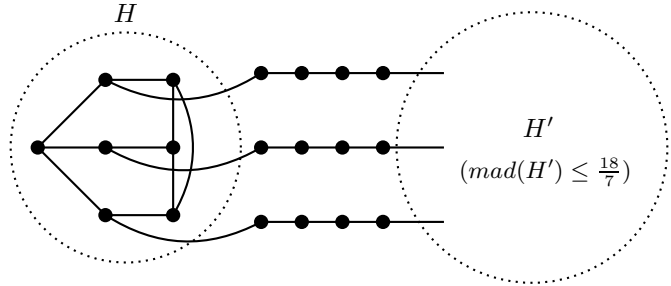

\bigskip

We also study the value of $mad(G)$ to be $ch_3^d(G)\le 7$.

\begin{theorem}\label{thm:main7}
If $mad(G)<\frac{14}{5}$, then $ch_3^d(G)\le 7$.
\end{theorem}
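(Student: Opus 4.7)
The plan is to prove Theorem~\ref{thm:main7} by the discharging method, paralleling the strategy used for Theorem~\ref{thm:main}. Suppose to the contrary that the statement fails, and pick a counterexample $G$ together with a list assignment $L$ satisfying $|L(v)|\ge 7$ for every $v$ that witnesses the failure, with $|V(G)|+|E(G)|$ minimum. Every proper subgraph of $G$ still has $mad<\frac{14}{5}$ and so is 3-dynamically $L$-colorable by minimality of $G$.

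The first step is to isolate a list of reducible configurations. The natural candidates are a vertex of degree at most one; threads of 2-vertices of some controlled length; a 3-vertex adjacent to several 2-vertices; and a handful of small local patterns combining 3-vertices with short threads of 2-vertices. For each candidate configuration one deletes a well-chosen subset $S\subseteq V(G)$, invokes minimality on $G-S$ to obtain a 3-dynamic $L$-coloring $\phi'$, and extends $\phi'$ to $G$ one vertex of $S$ at a time. When coloring $v\in S$ one forbids three kinds of colors: (a) the colors on already colored neighbors of $v$; (b) the at most $\deg_G(v)$ colors that would leave fewer than $\min\{3,\deg_G(v)\}$ distinct colors on $N_G(v)$; and (c) for each neighbor $u$ of $v$ whose 3-dynamic condition has not yet been secured by its other colored neighbors, the at most few colors at $v$ that would leave $u$ short of three distinct colors. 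The order in which $S$ is colored must be chosen so that the total number of forbidden colors at each step is strictly less than $|L(v)|=7$.

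The second step is the discharging argument. Assign initial charge $\mu(v)=\deg_G(v)$, so that the total charge is $2|E(G)|<\frac{14}{5}|V(G)|$; the goal is to redistribute to $\mu^*(v)\ge \frac{14}{5}$ everywhere, producing a contradiction. Each 2-vertex begins with deficit $\frac{4}{5}$, while a $k$-vertex with $k\ge 3$ starts with surplus $k-\frac{14}{5}$. A natural rule set is that every vertex of degree at least $3$ sends $\frac{2}{5}$ across each incident edge to a 2-neighbor, together with secondary rules that send charge along threads of 2-vertices to the nearest endpoint of degree at least $3$. The reducible configurations must be strong enough that, after these transfers, every vertex ends with charge at least $\frac{14}{5}$.

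The principal obstacle I expect is calibrating the reducible configurations against the discharging rules. Since $\frac{14}{5}$ sits only $0.8$ above $2$ and $0.2$ above $3$, a 3-vertex has almost no surplus, so it cannot afford to be adjacent to more than one 2-vertex without drawing help from elsewhere, and a thread of 2-vertices cannot be too long. Ruling out the overloaded patterns is the technically delicate piece: each such proof requires carefully tracking how the 3-dynamic condition at both the uncolored vertex and its neighbors constrains the available palette, and then verifying that in the worst case at least one of the seven colors remains. Once the right set of reducibility results is established, the discharging accounting should reduce to routine case checking on the degree profile around a putative minimal vertex.
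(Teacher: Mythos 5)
Your overall strategy (minimal counterexample, reducible configurations, discharging with target charge $\frac{14}{5}$) is exactly the paper's, and your primary rule of sending $\frac{2}{5}$ to each 2-neighbor is the paper's rule R1. But the proposal stops at the level of a plan: no specific configuration is actually shown to be reducible, and the one place where you commit to a concrete design choice, it points in the wrong direction. You propose "secondary rules that send charge along threads of 2-vertices to the nearest endpoint of degree at least 3." Threads of two or more consecutive 2-vertices are already excluded by the basic reducibility lemma (Lemma~\ref{pendent:r}(2) applies for all $k\ge 6$), so there is nothing to send charge along. The real deficit after R1 sits at 3-vertices with exactly one 2-neighbor (the set $W_1$): such a vertex ends with $3-\frac{2}{5}=\frac{13}{5}<\frac{14}{5}$. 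The missing idea is that these $W_1$-vertices must be fed by their two $3^+$-neighbors ($\frac{1}{10}$ from each, the paper's R2), and making that rule safe is precisely what forces the two nontrivial reducible configurations of the paper: no two adjacent $W_1$-vertices (Lemma~\ref{three-W1-nbr7}), and no 3-vertex with three neighbors in $W_1$ (Lemma~\ref{three-W1-2-nbr7}), together with the degree bound that a $k$-vertex ($k\in\{3,4\}$) has at most $k-2$ 2-neighbors (Lemma~\ref{4-2-nbr7}).

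None of these reducibility proofs is supplied, and they are not routine instances of your generic "delete $S$, extend greedily" template: for example, in Lemma~\ref{three-W1-nbr7} one must first recolor the two adjacent $W_1$-vertices to avoid the colors of specific second-neighbors before coloring the deleted 2-vertices, and in Lemma~\ref{three-W1-2-nbr7} one must separately handle the cases where the pendant 2-vertices share a common neighbor $w$, using Lemma~\ref{4-2-nbr7} to force $w$ to have high degree. Since the proposal neither identifies the correct configuration set nor carries out any extension argument, and its stated secondary rule would not repair the actual deficit, the argument as written does not close.
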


Let $H$ be the graph that is obtained from the Petersen graph by deleting one edge. Then $mad(H)=\frac{14}{5}$ and  $ch_3^d(H)=8$.  Thus the bound in Theorem \ref{thm:main7} is tight.

\medskip
In addition, infinitely many tight examples for Theorem \ref{thm:main7} are obtained by a similar way of construction shown in Figure~\ref{fig:tight-many}. For a given graph $H'$ with $mad(H')\le\frac{14}{5}$
and $ch_3^d(H') \leq 8$,
let $G$ be a graph obtained from the union of $H$ and $H'$ connecting by internally disjoint paths of length at least four such that $\deg_G(v)\le 3$ for any $v\in V(H)$.
Note that there are at most two such paths since $H$ has two vertices of degree two.
Then $mad(G)=\frac{14}{5}$ and $ch_3^d(G) = 8$.


\bigskip
We also show that any graph $G$ is $3$-dynamically 8-choosable if $mad(G) < 3$.

\begin{theorem}\label{thm:main8}
If $mad(G)<3$, then $ch_3^d(G)\le 8$.
\end{theorem}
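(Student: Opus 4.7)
The plan is to argue by contradiction via a discharging proof on a minimum counterexample, following the template used for Theorems~\ref{thm:main} and~\ref{thm:main7}. Let $G$ be a graph with $mad(G)<3$ that is not $3$-dynamically $8$-choosable and that minimizes $|V(G)|+|E(G)|$, and fix a list assignment $L$ with $|L(v)|\ge 8$ witnessing the failure. By monotonicity of $mad$ under subgraphs, every proper subgraph of $G$ admits a $3$-dynamic proper coloring from the restricted list, which is the only way minimality will be used.

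The first step is structural: I will show that $G$ avoids a collection of small \emph{reducible configurations}. The ones I expect to be needed are (i) a vertex of degree at most $1$; (ii) two adjacent $2$-vertices; (iii) a thread of three consecutive $2$-vertices; and (iv) a $3$-vertex with ``too many'' $2$-vertex neighbors whose own further neighborhoods stay inside a bounded low-degree region. For each configuration $C$, the template is to remove a carefully chosen vertex set $S\subseteq V(C)$, apply minimality to color $G-S$, and then extend one vertex at a time. The colors forbidden to a vertex $v\in S$ colored last decompose into three types: $\deg_G(v)$ colors from properness; at most one color per neighbor $u$ of $v$ with $\deg_G(u)\le 2$, needed so that $u$ sees distinct colors; and at most two colors per neighbor $u$ of $v$ with $\deg_G(u)\ge 3$, needed to complete $u$'s $3$-dynamic quota when $v$ is the unique re-colored neighbor of $u$. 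I must check that this total never exceeds $7$, possibly after a local swap of colors along a short alternating path or a careful reordering of the extension.

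The second step is discharging. Assign initial charge $\mu(v)=\deg_G(v)-3$, so that $\sum_v\mu(v)<0$ by the hypothesis $mad(G)<3$. I will design a small set of redistribution rules in which vertices of degree at least~$4$ donate charge to nearby $2$-vertices, quite possibly through an intermediate rule that routes charge via $3$-vertices adjacent to $2$-vertices. Using the absence of the reducible configurations from Step~1, each vertex should end with non-negative final charge, contradicting the sign of the total.

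The principal obstacle lies in Step~1, specifically in the two-color cost coming from the $3$-dynamic constraint at degree-$3$ neighbors. When $u$ has $\deg_G(u)=3$ and $v$ is the only re-colored vertex of $N_G(u)$, completing $u$'s dynamic quota can force $\phi(v)$ to avoid both of the other two neighbors of $u$; these ``dynamic forbidden'' colors aggregate quickly when $v$ lies near several degree-$3$ vertices, and in the tight cases with a $2$-vertex $v$ the naive count gives six or seven forbidden colors, leaving essentially no slack in an $8$-list. Handling these tight cases will require splitting on the colors that actually appear on the second neighborhood, performing Kempe-style swaps along a short path of $2$- and $3$-vertices, and calibrating the list of reducible configurations in (iii) and (iv) so that the surviving cases can all be colored; balancing this structural richness against the looseness of $mad<3$ (compared with the tighter cutoffs $18/7$ and $14/5$ of the earlier theorems) is where I expect the bulk of the work to go.
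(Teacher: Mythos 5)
Your proposal reproduces the paper's overall strategy (minimal counterexample, reducible configurations, discharging), but as written it is a template rather than a proof, and the configuration list you propose is not the one that makes the argument close. Your configuration (iii) is redundant given (ii), since a thread of three consecutive $2$-vertices already contains two adjacent $2$-vertices. More importantly, your list omits the two facts the discharging actually needs: (a) a $3^-$-vertex has \emph{no} $2$-neighbor at all --- not merely ``not too many,'' and with no side condition on the surrounding low-degree region --- and (b) for $k\in\{4,5\}$, a $k$-vertex has at most $k-2$ $2$-neighbors. Without (b), a $4$-vertex with four $2$-neighbors ends with charge $4-4\cdot\frac{1}{2}=2<3$ under any rule set that delivers to each $2$-vertex the $1$ unit it needs, so your Step~2 cannot balance with the rules you sketch. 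Without the full strength of (a), you are pushed toward the intermediate ``route charge via $3$-vertices'' rule you mention, which is precisely the complication the correct lemma eliminates: once no $3^-$-vertex touches a $2$-vertex, every $2$-vertex has two $4^+$-neighbors, the single rule ``each $4^+$-vertex gives $\frac{1}{2}$ to each $2$-neighbor'' suffices, and $3$-vertices play no role in the discharging.

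The difficulties you anticipate in Step~1 (dynamic-forbidden colors aggregating to six or seven, Kempe-style swaps along alternating paths, delicate reorderings) do not arise. Both reducible configurations are handled by deleting a single \emph{edge} incident to the high-degree vertex of the configuration, coloring the smaller graph by minimality, uncoloring that vertex together with its $2$-neighbors, and extending greedily: for (a) the forbidden count at the $3^-$-vertex is at most $3+3+1=7<8$, for (b) it is at most $(k-1)+3\le 7<8$, and each uncolored $2$-vertex then retains at least one (respectively three) available colors, enough to restore the dynamic condition at the center. The slack of an $8$-list makes every extension a straightforward count; no case analysis on second-neighborhood colors is needed. So the gap is concrete: the specific reducible configurations that drive the proof are missing from your plan, none of the reducibility verifications is carried out, and the configurations you do list would leave the discharging unable to balance at $4$- and $5$-vertices.
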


The above result is also tight, since there are infinitely many tight examples.  Note that the Petersen graph $H$ satisfies $mad(H)=3$ and $\chi_3^d(H)=\chi(H^2)=10$. Now we will construct a graph $W$ with $mad(W)=3$ and $ch_3^d(W)= 9$.  Let $H_1$ and $H_2$ be the two copies of the Petersen graph.  Let $W$ be the graph obtained by connecting $H_1$ and $H_2$ by a path of length 3. Then we can check that $mad(W) = 3$ and $ch_3^d(W) = 9$.
Since $H_1$ and $H_2$ have a vertex of degree {four}, respectively, $V(H_1)$ and $V(H_2)$ do not have to have all distinct colors. Thus we have $\chi_3^d(W) = 9$ and also $ch_3^d(W) = 9$.

Similarly, we can find infinitely many tight examples for Theorem \ref{thm:main8}.
For a given graph $H'$ with $mad(H')\le 3$ and $ch_3^d(H') \leq 9$,
let $G$ be a graph obtained from the union of $H$ and $H'$ connecting by exactly one path of length at least three. Then $mad(G) = 3 = mad(H)$ and $ch_3^d(G) =  9 = ch_3^d(H)$.

\bigskip

Note that every planar graph $G$ with grith at least $g$ satisfies $mad(G)<\frac{2g}{g-2}$.  Thus
from
Theorem~\ref{thm:main},  Theorem~\ref{thm:main7}, and Theorem~\ref{thm:main8}, we have the following corollary.  Note that Theorem~\ref{thm:main8} implies Theorem~\ref{SLW-thm} when $r = 3$.

\begin{corollary} \label{corollary:main}
Let $G$ be a planar graph.  Then we have the following: \\
(1) $ch_3^d (G) \leq 6$ if the girth of $G$ is at least $9$,\\ 
(2) $ch_3^d (G) \leq 7$ if the girth of $G$ is at least $7$,\\
(3) $ch_3^d (G) \leq 8$ if the girth of $G$ is at least $6$.\\
\end{corollary}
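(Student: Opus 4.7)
The plan is to reduce each of the three statements to the corresponding maximum-average-degree theorem already proved in the paper (Theorem~\ref{thm:main}, Theorem~\ref{thm:main7}, and Theorem~\ref{thm:main8}), using nothing more than the elementary Euler-formula bound on the maximum average degree of a planar graph with prescribed girth. In fact the paper already asserts the key inequality in the sentence preceding the corollary, so the proof amounts to invoking it correctly for each value of $g$.

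The first step is to record the standard lemma that if $H$ is a planar graph with girth at least $g$, then $mad(H) < \frac{2g}{g-2}$. For any subgraph $H'$ of $H$, either $H'$ is a forest (in which case its average degree is less than $2 \le \frac{2g}{g-2}$), or $H'$ contains a cycle, so its girth is at least $g$ and Euler's formula combined with the face-length inequality $2|E(H')| \ge g\cdot f(H')$ yields
\[
|E(H')| \le \frac{g}{g-2}\bigl(|V(H')| - 2\bigr),
\]
so that the average degree of $H'$ is \emph{strictly} less than $\frac{2g}{g-2}$. Taking the maximum over subgraphs gives the lemma.

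The second step is a one-line specialization in each case. Taking $g = 9$ produces $mad(G) < \frac{18}{7}$, and Theorem~\ref{thm:main} gives (1); taking $g = 7$ produces $mad(G) < \frac{14}{5}$, and Theorem~\ref{thm:main7} gives (2); taking $g = 6$ produces $mad(G) < 3$, and Theorem~\ref{thm:main8} gives (3). Since all the substantive work lives in the three main theorems, there is essentially no obstacle; the only minor point worth monitoring is preservation of the \emph{strict} inequality $mad(G) < \frac{2g}{g-2}$, which is secured by the ``$-2$'' in the Euler-type edge bound and which is required since the hypotheses of the three theorems are strict.
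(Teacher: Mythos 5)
Your proposal is correct and follows exactly the paper's route: the paper likewise derives the corollary by citing the standard fact that a planar graph of girth at least $g$ has $mad(G)<\frac{2g}{g-2}$ and then invoking Theorems~\ref{thm:main}, \ref{thm:main7}, and \ref{thm:main8} for $g=9,7,6$ respectively. Your extra care in verifying the strict inequality via the Euler-formula bound is a harmless elaboration of what the paper leaves implicit.
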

It was showed in \cite{Havet} that $ch(G) \leq 6$ if $mad(G) < \frac{18}{7}$ and $\Delta(G) \leq 3$.  And it was also showed in \cite{Cranston-Kim-07} independently that $ch(G) \leq 6$ if $mad(G) < \frac{18}{7}$,  $\Delta(G) \leq 3$, and the girth of $G$ is at least 7.  Thus Theorem \ref{thm:main} is an extension of the results in \cite{Cranston-Kim-07, Havet}. On the other hand,  it was showed in \cite{Cranston-Kim-07} that $ch(G) \leq 7$ if $mad(G) < \frac{14}{5}$ and $\Delta(G) \leq 3$.  Thus Theorem \ref{thm:main7} is an extension of the result in \cite{Cranston-Kim-07}.  C​onsequently, Corolloary \ref{corollary:main} is an extension of the results in \cite{Cranston-Kim-07}.


\bigskip

This paper is organized as follows. In Section~2, we give preliminaries about simple reducible configurations. In Sections 3, 4, and 5, we prove Theorems~\ref{thm:main}, \ref{thm:main7}, and \ref{thm:main8}, respectively.

\section{Preliminaries}

A vertex of degree $d$ is called a $d$-\textit{vertex}, and a vertex of degree at least $d$ (at most $d$) is called a $d^+$-\textit{vertex} ($d^{-}$-\textit{vertex}).
If $x$ is adjacent to a $d$-vertex $y$ ($d^+$-vertex, or $d^{-}$-vertex), then we say that $y$ is a $d$-neighbor of $x$
($d^+$-neighbor  of $x$, or $d^{-}$-neighbor of $x$).
Two vertices $x$ and $y$ are \textit{weakly adjacent} in $G$ if they have a common 2-neighbor. In this case, we say that $x$ is a \textit{weak neighbor} of $y$.

\medskip
For each $i\in\{ 0,1,2,3\}$, we let $W_i(G)$ be the set of $3$-vertices which have exactly $i$ 2-neighbors.
That is,
\[W_i(G)=\{ v\in V(G)\mid \deg(v)=3, \text{ and exactly }i\text{ neighbors of }v\text{ are 2-vertices} \}.\]
If there is no confusion, we denote $W_i(G)$ by $W_i$. And let $[n] = \{1, 2, \ldots, n\}$.

\begin{lemma} \label{pendent:r}
Let $k\ge 6$. Let $G$ be a graph with smallest number of vertices and edges such that $ch_3^d(G)>k$.
Then the followings hold.
\begin{itemize}
\item[\rm(1)] There is no $1^-$-vertex.
\item[\rm(2)] No two $2^{-}$-vertices are adjacent.
\item[\rm(3)] No two adjacent vertices share a common 2-neighbor.
\item[\rm(4)] For each $i\in[3]$, every vertex in $W_i(G)$  has $i$ distinct weak neighbors.
\end{itemize}
\end{lemma}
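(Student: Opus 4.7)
The plan is to prove all four parts by a minimum-counterexample reducibility argument. For each forbidden configuration, I will exhibit a proper subgraph $G'$ of $G$ obtained by deleting one or two low-degree vertices, invoke the minimality of $G$ to obtain a list $3$-dynamic $k$-coloring $\phi$ of $G'$ from the lists induced by $L$, and then extend $\phi$ to a list $3$-dynamic $k$-coloring of $G$ by choosing valid colors for the deleted vertices. A color will be called \emph{forbidden} at a deleted vertex $w$ if it either (a) equals the color of a neighbor of $w$ in $G$, (b) would prevent $w$'s own dynamic condition, or (c) would break the dynamic condition at a neighbor of $w$ whose neighborhood in $G'$ was missing only $w$. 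Because each deleted vertex will have degree at most $2$ and each affected external neighbor will have degree at most $3$ in the cases of interest, I will bound the total number of forbidden colors by at most $k-1\le 5$, leaving at least one admissible color in any list of size $\ge k\ge 6$. I will prove the four parts in the stated order so that later parts may use the structural conclusions of earlier ones.

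For (1), I delete a vertex $v$ with $\deg(v)\le 1$ and observe that at most $3$ colors are forbidden for $\phi(v)$: the color of its unique neighbor (if any), together with the up to $2$ colors needed to preserve that neighbor's dynamic condition. For (2), I delete both adjacent $2^{-}$-vertices and color them sequentially; a short case analysis on their degrees in $\{1,2\}$ and on whether their external neighbors coincide will show that at each step at most $5$ colors are forbidden. For (3), I delete the shared $2$-neighbor $w$ and observe that the forbidden colors for $\phi(w)$ are $\phi(u),\phi(v)$ plus at most one further color per each of $u,v$ of degree exactly $3$, totalling at most $4$; the dynamic condition at $w$ itself, namely $\phi(u)\neq\phi(v)$, is automatic from the edge $uv$.

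The main obstacle will be part (4). Using parts (2) and (3), the offending configuration is a vertex $v\in W_i$ with two $2$-neighbors $w_j,w_k$ whose other endpoints coincide at a vertex $x$, forming a $4$-cycle $v w_j x w_k$ in which $\deg(x)\ge 3$ (by (2)) and $v,x$ are non-adjacent (by (3)). I will delete $w_k$ and, letting $y$ denote the third neighbor of $v$ and, when $\deg(x)=3$, letting $z$ denote the third neighbor of $x$, enumerate the forbidden colors for $\phi(w_k)$: $\phi(v),\phi(x)$ from properness; $\phi(w_j),\phi(y)$ from the dynamic condition at $v$ (whose degree is restored to $3$); and, when $\deg(x)=3$, $\phi(w_j),\phi(z)$ from the dynamic condition at $x$. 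The $w_k$-dynamic condition $\phi(v)\neq\phi(x)$ will be inherited automatically from $w_j$'s dynamic condition in $G-w_k$, since $w_j$'s two neighbors there are exactly $v$ and $x$. The key observation is that $\phi(w_j)$ appears in both dynamic lists, so the total number of \emph{distinct} forbidden colors is at most $|\{\phi(v),\phi(x),\phi(w_j),\phi(y),\phi(z)\}|\le 5$; when $\deg(x)\ge 4$ this drops to $4$. Thus a valid color for $w_k$ always exists in $L(w_k)$ of size $\ge k\ge 6$, contradicting $ch_3^d(G)>k$.
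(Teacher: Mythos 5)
Your proposal is correct and follows essentially the same route as the paper: take the minimal counterexample, delete the low-degree vertices of each configuration, invoke minimality to color the rest, and count at most $k-1$ forbidden colors when extending (with the same key observations, e.g.\ that in (3) and (4) the dynamic condition at the deleted $2$-vertex is inherited automatically, and that in (4) the color $\phi(w_j)$ is double-counted so only five distinct colors are forbidden). The only cosmetic difference is that in (2) you color the two deleted vertices sequentially with a $5$-color bound at each step, whereas the paper reserves $k-4\ge 2$ colors at each and chooses them jointly; both are valid.
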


\begin{figure}[h!]
\centering
\begin{subfigure}{.24\textwidth}\centering\captionsetup{width=1.\linewidth}
    \begin{tikzpicture}[thick,scale=1]
        \path (-3,0) coordinate (4)  (-2,0) coordinate (3)  (-1,0) coordinate (1)    (0,0) coordinate (2);
        \path (1)edge (2) edge (3); \path (3)edge (4);
        \path (2)edge (0.3,0) edge (0.3,0.3) edge (0.3,-0.3);
        \path (4)edge (-3.3,0) edge (-3.3,0.3) edge (-3.3,-0.3);
        \fill  (3) node[above]{$x$} (1) node[above]{$y$};
        \fill (1) circle (3pt) (3) circle (3pt)
           (2) circle (3pt) (4) circle (3pt) ;
    \end{tikzpicture}

    \bigskip
    \caption{Figure for (2)}\label{fig:C2}
    \end{subfigure}
   \qquad  \qquad
\begin{subfigure}{.24\textwidth}\centering
    \begin{tikzpicture}[thick,scale=1]
        \path (-2,0) coordinate (x)  (-1,0.5) coordinate (w)   (0,0) coordinate (y);
        \path (x)edge (y); \path (w)edge (x) edge (y);
        \path (y)edge (0.3,0) edge (0.3,0.3) edge (0.3,-0.3);
        \path (x)edge (-2.3,0) edge (-2.3,0.3) edge (-2.3,-0.3);
        \fill  (x) node[above]{$x$} (y) node[above]{$y$}(w) node[above]{$w$};
        \fill  (w) circle (3pt)
        (x) circle (3pt) (y) circle (3pt) ;
    \end{tikzpicture}
    \caption{Figure for (3)}\label{fig:ad-weakad}
    \end{subfigure}
    \qquad    \qquad  \qquad
\begin{subfigure}{.24\textwidth}\centering
    \begin{tikzpicture}[thick,scale=1]
        \path (-3,0) coordinate (1) (-2,0) coordinate (x)  (-1,0.5) coordinate (w1)   (0,0) coordinate (y)(-1,-0.5) coordinate (w2);
       \path (w1)edge (x) edge (y); \path (w2)edge (x) edge (y); \path(x) edge(1);
        \path (y)edge (0.3,0) edge (0.3,0.3) edge (0.3,-0.3);
        \path (1)edge (-3.3,0) edge (-3.3,0.3) edge (-3.3,-0.3);
        \fill  (x) node[above]{$x$} (y) node[above]{$y$}(w1) node[above]{$w_1$}(w2) node[above]{$w_2$};
        \fill  (x) circle (3pt)  (w1) circle (3pt)(w2) circle (3pt)
       (1) circle (3pt) (y) circle (3pt) ;
    \end{tikzpicture}
    \caption{Figure for (4)}\label{fig:disticnt_weak}
    \end{subfigure}\caption{Reducible configurations for Lemma~\ref{pendent:r} \label{fig:first:lemma}}
\end{figure}
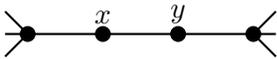
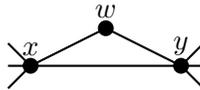
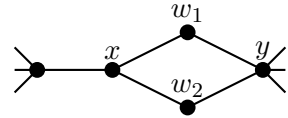

\begin{proof} We prove (1)$\sim$(4) one by one. Since $ch_3^d(G)>k$, there is a list assignment $L$ of $G$ such that $|L(v)|\ge k$ for each vertex $v$ of $G$, and $G$ is not 3-dynamically $L$-colorable.

\medskip

\noindent (1) Let $v$ be a $1^-$-vertex.
Since $H=G-\{v\}$ is smaller than $G$, $H$ is 3-dynamically $L$-colorable.
Thus there is a 3-dynamic coloring $\phi$ of $H$ such that $\phi(a)\in L(a)$ for any $a\in V(H)$.
Note that the number of available colors at $v$ is at least $k-3$.
Since $k-3\ge 1$, it is easy to see that $\phi$ can be extended to a 3-dynamic coloring  of $G$ so that $G$ is 3-dynamically $L$-colorable, which is a contradiction to the choice of $G$.

\medskip

\noindent (2) Suppose that two 2-vertices $x$ and $y$ are adjacent (See Figure~\ref{fig:first:lemma}-(a)).
Let $H=G-\{x,y\}$. Then $H$ is 3-dynamically $L$-colorable since $H$ is smaller than $G$. Thus there is a 3-dynamic coloring $\phi$ of $H$ such that $\phi(a)\in L(a)$ for any $a\in V(H)$.
Note that the number of available colors at $x$ and $y$ are at least $k-4$.
And since $k-4\ge 2$, it is easy to see that $\phi$ can be extended to a 3-dynamic coloring  of $G$ so that $G$ is 3-dynamically $L$-colorable, a contradiction to the choice of $G$.

\medskip

\noindent (3) Suppose that two adjacent vertices $x$ and $y$ share a common 2-neighbor $w$ (See Figure~\ref{fig:first:lemma}-(b)).
Let $H=G-\{w\}$.  Then $H$ is 3-dynamically $L$-colorable since $H$ is smaller than $G$.
Thus there is a 3-dynamic coloring $\phi$ of $H$ such that $\phi(a)\in L(a)$ for any $a\in V(H)$.
Note that the number of available colors at $w$ is at least $k-4$.
Since $k-4\ge 1$, it is easy to see that $\phi$ can be extended to a 3-dynamic coloring  of $G$ so that $G$ is 3-dynamically $L$-colorable, a contradiction to the choice of $G$.

\medskip

\noindent (4)  From (3),
we know that for any vertex, the set of neighbors and the set of weak neighbors are disjoint.
 Note that (4) trivially holds for the vertices in $W_1$.
Suppose that there is a vertex $x\in W_2\cup W_3$ such that $x$ has two $2$-neighbors $w_1$ and $w_2$ and the other neighbors of $w_1$ and $w_2$ are the same as a vertex $y$ (See Figure~\ref{fig:disticnt_weak}).
Let $H=G-\{w_1\}$. $H$ is 3-dynamically $L$-colorable, since $H$ is smaller than $G$.
Thus there is a 3-dynamic coloring $\phi$ of $H$ such that $\phi(a)\in L(a)$ for any $a\in V(H)$.
Note that the number of available colors at $w_1$ is at least $k-5$.
Since $k-5\ge 1$, it is easy to see that $\phi$ can be extended to a 3-dynamic  coloring  of $G$ so that $G$ is 3-dynamically $L$-colorable, a contradiction to the choice of $G$.
\end{proof}

The following are simple properties in list coloring, which will be often used in the paper.
For a function $f$ assigning a positive integer to each $v\in V(G)$,
a graph $G$ is said to be {\em $f$-choosable} if
for any list assignment $L$ such that
$|L(v)| \geq f(v)$ for every vertex $v$, $G$ is $L$-colorable.

\begin{remark}  \label{basic-lemma}\rm For each $i \in [3]$,  the graph $H_i$ in Figure~\ref{K4_e} is $f_i$-choosable.
\begin{itemize}
\item[(a)] Let $H_1=K_4-v_1v_4$ with $V(H_1)=\{v_1,v_2,v_3,v_4\}$, which is the graph in Figure \ref{K4_e}-(a).
Let $f_1(v_1)=2$, $f_1(v_2)=3$, $f_1(v_2)=2$, and $f_1(v_4)=2$.
\begin{proof}
If $L(v_1) \cap L(v_4) \neq \emptyset$, then color $v_1$ and $v_4$ with a color $c \in L(v_1) \cap L(v_4)$.
And then color $v_3$ and $v_2$.  If $L(v_1) \cap L(v_4) = \emptyset$, then color $v_2$ with a color $c \notin L(v_3)$.  And then, the number of available colors at the remaining three vertices in the path are $1$, $2$, $2$.
In each case, we can see that $H_1$ is $f_1$-choosable.
\end{proof}

\item[(b)] Let $H_2$ be a graph with $V(H_2)=\{v_1,v_2,v_3,v_4,x_1,x_2,w\}$,  which is the graph in Figure \ref{K4_e}-(b).
   Let $f_2(v_1)=f_2(v_2)=f_2(v_3)=3$, $f_2(v_4)=2$, $f_2(x_j)=4$, $f(x)=5$, $f(w)=3$.

\begin{proof}
First color the vertex $x$ with a color $c \notin L(v_1)$.  And then color the remained vertices in the order of $v_4$, $v_3$, $x_2$, $w$, $x_1$, $v_2$, $v_1$.
\end{proof}

    \item[(c)] Let $H_3$ be a graph with $V(H_3)=\{v_1,v_2,v_3,v_4,x_1,x_2,w\}$,  which is the graph in Figure \ref{K4_e}-(c).
  Let $f_3(v_1)=f_3(v_2)=f_3(v_3)=3$, $f_3(v_4)=2$, $f_3(x_j)=4$, $f(x)=5$, $f_3(w)=3$.
\begin{proof}
First color the vertex $x$ with a color $c \notin L(v_1)$.  And then color the remained vertices in the order of $v_4$, $v_3$, $x_2$, $w$, $x_1$, $v_2$, $v_1$.
\end{proof}
\end{itemize}
\end{remark}

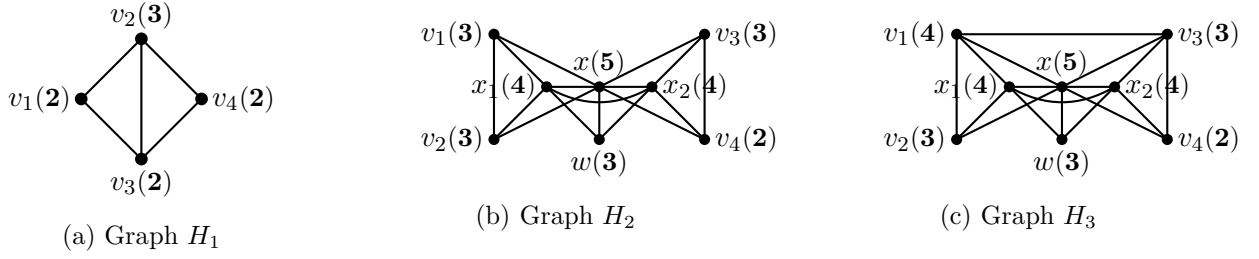
\begin{figure}
\centering
\begin{subfigure}{.24\textwidth}\centering
\begin{tikzpicture}[thick,scale=0.8]
        \path(-1,0) coordinate (1)    (0,1) coordinate (2)
        (0,-1) coordinate (3) (1,0) coordinate (4)        ;
        \path (1)edge (2) edge(3);\path (2) edge(3) edge(4);\path (3) edge(4);
        \fill  (1) node[left]{$v_1$\small$(\bf{2})$}   (2) node[above]{$v_2$\small$$\small$(\bf{3})$}    (3) node[below] {$v_3$\small$(\bf{2})$}  (4) node[right]{$v_4$\small$(\bf{2})$};
        \fill (1) circle (3pt) (2) circle (3pt) (3) circle (3pt) (4) circle (3pt);
    \end{tikzpicture}
    \caption{Graph $H_1$}
    \end{subfigure} \qquad  \qquad
    \begin{subfigure}{.24\textwidth}
\begin{tikzpicture}[thick,scale=0.7]
        \path
        (-2,1) coordinate (1)        (-2,-1) coordinate (2)        (2,1) coordinate (3)        (2,-1) coordinate (4)        (-1,0) coordinate (x1)        (1,0) coordinate (x2)       (0,0) coordinate (x) (0,-1) coordinate (w) ;
        \path (x) edge (1)edge (2) edge (3)edge (4)edge (x1) edge (x2) edge(w);        \path (x1) edge (1) edge (2) edge (w);        \path (x2) edge (3) edge (4) edge (w);   \path (1) edge (2);        \path (3) edge (4);      \path (x1) edge[bend right]  node[above] {} (x2);
        \fill        (1) node[left]{$v_1$\small$(\bf{3})$} (2) node[left]{$v_2$\small$(\bf{3})$} (3) node[right] {$v_3$\small$(\bf{3})$} (4) node[right]{$v_4$\small$(\bf{2})$}
     (x) node[above]{$x$\small$(\bf{5})$}          (x1) node[left]{$x_1$\small$(\bf{4})$}    (x2) node[right]{$x_2$\small$(\bf{4})$}      (w) node[below]{$w$\small$(\bf{3})$};
        \fill        (1) circle (3pt)        (2) circle (3pt)           (x2) circle (3pt)         (x1) circle (3pt)          (x) circle (3pt)        (w) circle (3pt)          (3) circle (3pt)        (4) circle (3pt);
    \end{tikzpicture}
    \caption{Graph $H_2$}
     \end{subfigure}%
\qquad  \qquad  \qquad
    \begin{subfigure}{.24\textwidth}
\begin{tikzpicture}[thick,scale=0.7]
        \path
        (-2,1) coordinate (1)        (-2,-1) coordinate (2)        (2,1) coordinate (3)        (2,-1) coordinate (4)        (-1,0) coordinate (x1)        (1,0) coordinate (x2)       (0,0) coordinate (x) (0,-1) coordinate (w) ;
        \path (x) edge (1)edge (2) edge (3)edge (4)edge (x1) edge (x2) edge(w);        \path (x1) edge (1) edge (2) edge (w);        \path (x2) edge (3) edge (4) edge (w);   \path (1) edge (2);        \path (3) edge (4);      \path (x1) edge[bend right]  node[above] {} (x2); \path (1) edge (3);
        \fill        (1) node[left]{$v_1$\small$(\bf{4})$} (2) node[left]{$v_2$\small$(\bf{3})$} (3) node[right] {$v_3$\small$(\bf{3})$} (4) node[right]{$v_4$\small$(\bf{2})$}
     (x) node[above]{$x$\small$(\bf{5})$}          (x1) node[left]{$x_1$\small$(\bf{4})$}    (x2) node[right]{$x_2$\small$(\bf{4})$}      (w) node[below]{$w$\small$(\bf{3})$};
        \fill        (1) circle (3pt)        (2) circle (3pt)           (x2) circle (3pt)         (x1) circle (3pt)          (x) circle (3pt)        (w) circle (3pt)          (3) circle (3pt)        (4) circle (3pt);
    \end{tikzpicture}
    \caption{Graph $H_3$}
     \end{subfigure}%

\caption{Graphs in Remark~\ref{basic-lemma}. The bold number in the parenthesis of each vertex in graph $H_i$ denotes the value of $f_i$ in Remark~\ref{basic-lemma}.}\label{K4_e}
    \end{figure}


\noindent
{\bf Notations in Figures}:
A hollow vertex in Figure \ref{C5-special} stands for a $3^+$-vertex. Throughout  following all figures, a hollow vertex always means a $3^+$-vertex, whereas the degree of a solid vertex is the number of incident edges drawn in the figure.

\begin{lemma} \label{C5-subgraph}Let $k\ge 6$.
Let $G$ be a graph with smallest number of vertices and edges such that $ch_3^d(G)>k$.
The graphs in Figure \ref{C5-special} do not appear as an induced subgraph in $G$.
\end{lemma}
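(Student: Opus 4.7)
The plan is to run a minimum-counterexample argument in exactly the same spirit as the proof of Lemma~\ref{pendent:r}. Assume for contradiction that one of the configurations $C$ in Figure~\ref{C5-special} appears as an induced subgraph of $G$, and let $L$ be a list assignment on $G$ with $|L(v)|\ge k\ge 6$ witnessing that $G$ is not 3-dynamically $L$-colorable. For each configuration $C$ I would single out a small set $S\subseteq V(C)$ of vertices to uncolor. In each case $S$ should include every $2$-vertex of $C$ (their deletion is essentially free) together with a subset of the $3^+$-vertices of $C$ chosen so that, after removing $S$, the remaining graph $H:=G-S$ is strictly smaller than $G$. By minimality, $H$ admits a 3-dynamic $L$-coloring $\phi$.

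The task reduces to extending $\phi$ to the vertices of $S$. For each $v\in S$ I would account for the colors in $L(v)$ that are forbidden: at most one color per $\phi$-colored neighbor of $v$, plus in some cases one additional color per $\phi$-colored neighbor of $v$ whose 3-dynamic inequality is not yet guaranteed (i.e.\ a $3^+$-neighbor $u$ of $v$ outside $S$ whose neighborhood currently shows only two distinct colors, forcing $v$ to avoid those two). Using Lemma~\ref{pendent:r}(2)--(4), each $2$-vertex of $C$ has no $2$-neighbor and distinct weak neighbors, so the counting is controlled. Since $k\ge 6$, the residual list at each vertex of $S$ has size at least the value prescribed by the function $f_i$ in Remark~\ref{basic-lemma}, and the structure of constraints among the vertices of $S$ (edges of $G[S]$ together with ``must-differ'' pairs forced by shared common neighbors or by the same 3-dynamic inequality at a vertex outside $S$) is isomorphic to one of the gadgets $H_1$, $H_2$, $H_3$ of Figure~\ref{K4_e}. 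Invoking the corresponding $f_i$-choosability statement in Remark~\ref{basic-lemma} completes the extension and contradicts $ch_3^d(G)>k$.

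The technique is uniform; the main obstacle is the configuration-by-configuration bookkeeping. Two subtleties in particular have to be tracked. First, one must check that a color ``forbidden'' from $v\in S$ by a 3-dynamic inequality at some external neighbor $u$ is not double-counted against a color already forbidden because it appears on a colored neighbor of $v$. Second, one must verify, for each choice of $S$, that the non-edges inside $G[S]$ really do correspond to freely-choosable pairs in the appropriate $H_i$: a $2$-vertex of $C$ (playing the role of $v_4$ in $H_2$ or $H_3$) has only two available colors, so the ordering of coloring in Remark~\ref{basic-lemma} must be respected. Provided these case-specific verifications go through for every configuration in Figure~\ref{C5-special}, the lemma follows. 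The hardest part is simply choosing, for each configuration, the right $S$ so that the residual constraint pattern matches $H_1$, $H_2$, or $H_3$ exactly rather than some slightly larger graph that might fail to be $f$-choosable.
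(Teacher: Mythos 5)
Your outline does match the paper's strategy: delete a set $S$ containing the $2$-vertices of the configuration together with some of its $3$-vertices, color $H=G-S$ by minimality, prune the lists of the vertices of $S$, observe that $G^2[S]$ is one of the gadgets of Remark~\ref{basic-lemma}, and finish by $f_i$-choosability (the paper takes $S=\{v_1,v_3,v_5,w\}$ with $G^2[S]\cong H_1$ for configuration (a), and $S=\{v_1,v_2,v_3,v_4,v_6,w_1,w_2,w_3\}$ with $G^2[S]\cong H_3$ for configuration (b)). However, there is a concrete step that your framework cannot produce. Your accounting only ever removes colors from the lists of vertices of $S$. In configuration (a) the $2$-vertex $v_3$ belongs to $S$ while \emph{both} of its neighbors $v_2$ and $v_4$ remain in $H$; the dynamic condition at $v_3$ requires $\phi(v_2)\neq\phi(v_4)$, and since $v_2$ and $v_4$ need not be adjacent in $H$, the coloring $\phi$ may assign them the same color, in which case no assignment of colors to $S$ can repair the defect. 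The paper deals with this by first modifying $\phi$ itself, recoloring $v_2$ (which has very low degree in $H$) so that $v_2$ and $v_4$ receive distinct colors, and only then pruning lists. This preprocessing is not a list restriction on any $v\in S$ and is invisible to your scheme of ``colors forbidden at $v\in S$''; without it the extension step can fail outright.

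Beyond that, your proposal defers essentially all of the content of the lemma to unspecified ``case-specific verifications'': the actual choice of $S$, the check that the constraint graph is exactly $H_1$ or $H_3$ rather than a supergraph, and the count showing each residual list meets the threshold of $f_i$ (for instance $|L'(v_1)|,|L'(v_3)|,|L'(w)|\ge k-4\ge 2$ and $|L'(v_5)|\ge k-3\ge 3$ in case (a), and $|L'(v_2)|\ge k-1$, $|L'(v_i)|\ge k-2$ for $i\in\{1,3,4,6\}$, $|L'(w_i)|\ge k-3$ in case (b)). As written this is a plausible plan in the right spirit, but not yet a proof.
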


\begin{figure}[h!]
\centering
\begin{subfigure}{.24\textwidth}\centering
\begin{tikzpicture}[thick,scale=1]
        \path    (0,-.5) coordinate (v4)
        (1,0.3) coordinate (v3) (0.5,1) coordinate (v2)   (-0.5,1) coordinate (v1) (-1,0.3) coordinate (v5)       (1,1.5) coordinate (2)  (-1,1.5) coordinate (w)   (-1.5,2) coordinate (u) ;
        \path (v1)edge (v2) edge(w) edge (v5);\path (v4) edge(v5) edge(v3);\path (v2) edge(2)edge(v3);\path (w)edge (u);
        \path (v4) edge (0,-0.8) edge (-0.2,-0.8) edge (0.2,-0.8);
        \path (2) edge (1.4,1.8) edge (1.4,1.5) edge (1.2,2);
        \path (u) edge (-1.8,1.8) edge (-1.8,2) edge (-1.8,2.2);
        \fill  (v1) node[left]{$v_1$}   (v2) node[above]{$v_2$}    (v3) node[below] {$v_3$}  (v4) node[right]{$v_4$} (v5) node[right]{$v_5$} (w) node[right]{$w$} (u) node[right]{$u$};
        \fill   (v1) circle (3pt) (v2) circle (3pt)
        (v3) circle (3pt)   (v5) circle (3pt)   (w) circle (3pt);
       \filldraw[fill=white!80!gray!20!] (u) circle (3pt) (v4) circle (3pt) (2) circle (3pt);
    \end{tikzpicture}
    \caption*{(a)}
    \end{subfigure} \qquad \qquad\qquad\qquad
    \begin{subfigure}{.24\textwidth}
    \centering
  \begin{tikzpicture}[thick,scale=1]
        \path(0,-1) coordinate (1)    (0,-.5) coordinate (v5)
        (1,0.2) coordinate (v4)  (1,1) coordinate (v3)   (-1,1) coordinate (v1) (0,1) coordinate (v2) (-1,0.2) coordinate (v6)        (-1.25,1.5) coordinate (w1)   (-1.5,2) coordinate (u1)  (0,2) coordinate (u2) (0,1.5) coordinate (w2)  (1.5,2) coordinate (u3) (1.25,1.5) coordinate (w3)  ;
        \path (v1)edge (v2) edge(w1) edge (v6);\path (v4) edge(v5) edge(v3);\path (v2) edge(w2)edge(v3);\path (w1)edge (u1);\path (v5) edge (v6)  ;\path (w2)edge(u2);\path (w3)edge(u3)edge (v3);
       \path (v5) edge (0,-0.9) edge (-0.3,-0.9) edge (0.3,-0.9);
        \path (u3) edge (1.8,1.8) edge (1.8,2) edge (1.8,2.2);
        \path (u2) edge (0,2.4) edge (-0.3,2.4) edge (0.3,2.4);
        \path (u) edge (-1.8,1.8) edge (-1.8,2) edge (-1.8,2.2);
        \fill  (v1) node[left]{$v_1$}   (v2) node[below]{$v_2$}    (v3) node[right] {$v_3$}  (v4) node[right]{$v_4$} (v5) node[right]{$v_5$} (w1) node[right]{$w_1$} (u1) node[right]{$u_1$}
        (w2) node[right]{$w_2$} (u2) node[right]{$u_2$} (v6) node[left] {$v_6$}
        (w3) node[right]{$w_3$} (u3) node[left]{$u_3$};
        \fill   (v1) circle (3pt) (v2) circle (3pt)(v6) circle (3pt)
        (v3) circle (3pt) (v4) circle (3pt)   (w1) circle (3pt)
        (w2) circle (3pt) (w3) circle (3pt);
      \filldraw[fill=white!80!gray!20!] (u1) circle (3pt) (u2) circle (3pt)  (u3) circle (3pt) (v5) circle (3pt);
    \end{tikzpicture}
    \caption*{(b)}
    \end{subfigure}
 \caption{Graphs in Lemma~\ref{C5-subgraph} (All labelled vertices are distinct.)}\label{C5-special}
\end{figure}
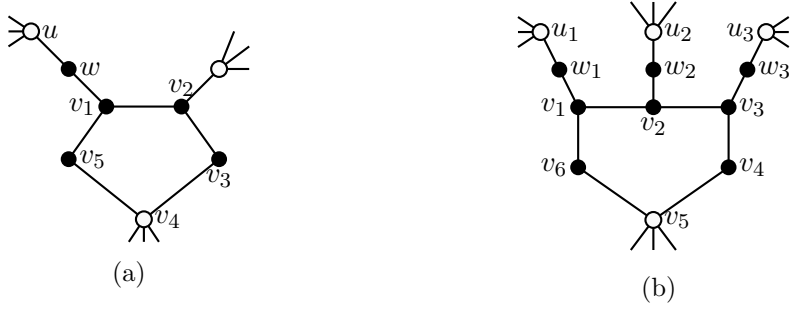
\begin{proof}
Let $L$ be a list assignment of $G$ such that $|L(v)|\ge k$ for each vertex $v$ of $G$ and $G$ is not 3-dynamically $L$-colorable.
Note that all labelled vertices in the figure are distinct. Suppose that the graph in Figure \ref{C5-special}-(a) appears in $G$ as an induced subgraph.
It has 7 vertices where $v_1, v_2$ are $3$-vertices, $v_3, v_5, w$ are $2$-vertices, and $v_4$, $u$ are $3^+$-vertex.
Let $S=\{v_1, v_3, v_5, w\}$ and $H=G-S$.
Since $H$ is smaller than $G$, $H$ is 3-dynamically $L$-colorable.
Thus there is a 3-dynamic coloring $\phi$ of $H$ such that $\phi(a)\in L(a)$ for any $a\in V(H)$, and $v_2$ and $v_4$ get distinct colors in $\phi$ (we can recolor $v_2$).

For $a\in S$, we denote by $L'(a)$ a subset of $L(a)$, which makes $\phi$ extended  to a $3$-dynamic coloring of $G$.
More precisely, $L'(a)$ is decided by the following rules.

\medskip
\noindent {\bf (Rules of deciding $L'(a)$ for $a \in S$)} \\
Let $Z = N_G(a) \cap V(H)$.  That is, $Z$ is the subset of $N_G(a)$ which are colored by the 3-dynamic coloring $\phi$.

\begin{itemize}
\item[(1)]  Remove $\phi(u)$ from $L(a)$ for each $u \in Z$.

\noindent
\item[(2)] For $u \in Z$, if $d_H(u) = 1$ and $u'$ is the neighbor of $u$ in $H$, then remove the color $\phi(u')$ from $L(a)$.

\noindent
\item[(3)] For $u \in Z$, if $d_H(u) \geq 2$, then select two colors from the neighbors of $u$, say $c_1, \ c_2$, in $L(a) \cap \{\phi(x) : x \in N_H(u)\}$ and remove $c_1$ and $c_2$ from $L(a)$.
{\noindent
\item[(4)] Let $u$ be a vertex in $H$ with $d_G(a, u) = 2$.  If $u$ and $a$ have a common 2-vertex neighbor in $G$, then remove $\phi(u)$ from $L(a)$.}
\end{itemize}

Then $L'(a)$ is the subset of $L(a)$ which are remained after (1), (2), (3), and (4).
Now we count the number of colors in $L'(a)$, and then show that $G^2[S]$ is $L'$-colorable.
(Throughout all proofs of the paper, we use a similar technique for obtaining such $L'$, we omit explanation at the other places.)

Let $c_1$ be a color which is colored at a neighbor of $v_4$ in $H$, that is, $c_1 \in \{\phi(x) : x \in N_H(v_4) \}$.  And let $c_2$ be the color which is assigned at the neighbor of $v_2$ in $H$, that is $c_2 = \phi(v_2^{'})$ where $N_G(v_2) = \{v_1, v_3, v_2^{'} \}$.
Take two colors $c_3$ and $c_4 $  from neighbors of $u$ in $H$.
We may assume the following.
\begin{eqnarray*}
&&L'(v_1)=L(v_1)-\{ \phi(v_2), \phi(v_4), \phi(u), c_2\};\\
&&L'(v_3)=L(v_3)- \{ \phi(v_2), \phi(v_4),c_1,c_2\};\\
&&L'(v_5)=L(v_5)-  \{\phi(v_2),\phi(v_4),c_1\};\\
&&L'(w) =L(w)-  \{ \phi(v_2),\phi(u), c_3,c_4\}.
\end{eqnarray*}
Therefore,
\[|L'(v_1)| \ge k-4  ,\quad |L'(v_3)| \ge k-4 ,\quad |L'(v_5)| \ge k-3 ,\quad |L'(w)| \ge k-4 .\]
Note that the subgraph of $G^2$ induced by $S$, $G^2[S]$, is isomorphic to $K_4$ minus an edge $wv_3$, a graph in Figure~\ref{K4_e}-(a).
Since $k-4\ge 2$ and $k-3\ge 3$, $G^2[S]$ is $L'$-colorable by (a) of Remark~\ref{basic-lemma}.
Then it is easy to see that $\phi$ can be extended to a 3-dynamic  coloring  of $G$ so that $G$ is 3-dynamically $L$-colorable, a contradiction.

Next, suppose that $G$ has the graph in  Figure~\ref{C5-special}-(b)  as an induced subgraph.
It has 12 vertices where $v_1$, $v_2$, $v_3$ are $3$-vertices, $v_4$, $v_6$, $w_1$, $w_2$, $w_3$ are $2$-vertices, and $v_5$, $u_1$, $u_2$, $u_3$ are $3^+$-vertex.
Let $S = \{v_1,v_2, v_3,v_4, v_6, w_1,w_2,w_3\}$.
Let $H=G-S$.
Since $H$ is smaller than $G$, $H$ is 3-dynamically $L$-colorable.
Thus there is a 3-dynamic coloring $\phi$ of $H$ such that $\phi(a)\in L(a)$ for any $a\in V(H)$.
For $a\in S$, let $L'(a)$ be a subset of $L(a)$, which makes $\phi$ extended to a $3$-dynamic coloring of $G$.
Note that $G^2[S]$ is isomorphic to the graph in  Figure~\ref{K4_e}-(c) and
\[ |L'(v_2)|\ge k-1, \quad |L'(v_i)| \ge k-2 \text{ for }i\in\{1,3,4,6\}, \quad
|L'(w_i)| \ge k-3 \text{ for }i\in\{1,2,3\}.\]
Note that we forbid just two colors at $v_4$ and $v_6$ since we will color $v_4$ and $v_6$ differently.
By (c) of Remark~\ref{basic-lemma}, it is 3-dynamically $L'$-colorable.
Thus $G$  is 3-dynamically $L$-colorable, a contradiction.
\end{proof}

\section{Proof of Theorem~\ref{thm:main}} 

In this section, we prove Theorem~\ref{thm:main}. We use the induction on the number of vertices and the number of edges.
In the following, we let $G$ be a minimal counterexample to Theorem~\ref{thm:main}. That is, $G$ is a graph with the smallest number of vertices and edges, $mad(G)<18/7$, and $ch_3^d(G)\ge 7$.
Then there exists a list assignment $L$ such that $|L(v)|\ge 6$  for each $v\in V(G)$ and $G$ is not $3$-dynamically  $L$-colorable.

\bigskip

From now on, we show that several subgraphs can not appear in $G$, which are called reducible configurations.
More precisely, we will show the following [\textbf{C1}]$\sim$[\textbf{C6}]:
\begin{itemize}
\item[] [\textbf{C1}]  There is no $1^-$ vertex. (Lemma~\ref{pendent:r}-(1))
\item[] [\textbf{C2}]  No two $2$-vertices are adjacent.  (Lemma~\ref{pendent:r}-(2))
\item[] [\textbf{C3}] No two vertices in $W_2$ are adjacent. (Lemma~\ref{no-two-W2-adjacent}, Figure~\ref{fig:C3})
\item[] [\textbf{C4}] For any vertex $x\in W_3$,
$x$ has three distinct weak neighbors in $W_1$.
(Lemma~\ref{three-nbr}, Figure~\ref{fig:C4})
\item[] [\textbf{C5}] There is no vertex in $W_1$, which has two neighbors in $W_2$. (Lemma~\ref{W1-two-W2-neighbors}, Figure~\ref{fig:C5})
\item[] [\textbf{C6}] There is no vertex $3$-vertex, which has one neighbor in $W_1$, one neighbor in $W_2$, one weak neighbor in $W_3$. (Lemma~\ref{C6:-nbr-W1W2W3}, Figure~\ref{fig:C6})
\end{itemize}

Note that [\textbf{C1}] and [\textbf{C2}] hold by Lemma~\ref{pendent:r}.
We will see that [\textbf{C3}]$\sim$[\textbf{C6}] hold.

\begin{lemma} \label{no-two-W2-adjacent}
\rm{[\textbf{C3}]} No two vertices in $W_2$ are adjacent.
\end{lemma}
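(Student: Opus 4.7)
The plan is to argue by contradiction, mirroring the scheme of Lemma~\ref{C5-subgraph}. Suppose two vertices $x, y \in W_2$ are adjacent. Label the two 2-neighbors of $x$ as $w_1, w_2$ with other endpoints $u_1, u_2$, and the two 2-neighbors of $y$ as $w_3, w_4$ with other endpoints $u_3, u_4$. Lemma~\ref{pendent:r} guarantees that each $u_i$ is a $3^+$-vertex in $V(G)\setminus\{x,y\}$, and that $u_1\neq u_2$, $u_3\neq u_4$.

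Set $S = \{x, y, w_1, w_2, w_3, w_4\}$ and $H = G - S$. By the minimality of $G$, there is a 3-dynamic $L$-coloring $\phi$ of $H$. Applying the list-restriction rules (1)--(4) from the proof of Lemma~\ref{C5-subgraph}, I define $L'(a)\subseteq L(a)$ for each $a\in S$ so that any proper $L'$-coloring of $G^2[S]$ extends $\phi$ to a 3-dynamic $L$-coloring of $G$. A direct count yields $|L'(x)|,|L'(y)|\geq 4$---rule~(4) forbids only the weak-neighbor colors $\phi(u_1),\phi(u_2)$ at $x$ and $\phi(u_3),\phi(u_4)$ at $y$---and $|L'(w_i)|\geq 3$, forbidding $\phi(u_i)$ via rule~(1) together with two colors from the $H$-neighbors of $u_i$ via rule~(3).

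The graph $G^2[S]$ is the join of the edge $xy$ with the matching $\{w_1w_2,w_3w_4\}$, equivalently $K_6$ minus a 4-cycle. To show it is $L'$-choosable with sizes $(4,4,3,3,3,3)$, the approach is to first pick $\phi(x)\in L'(x)$ and $\phi(y)\in L'(y)$ distinct, and then independently color the pairs $(w_1,w_2)$ and $(w_3,w_4)$ from the residual lists $L''(w_i)=L'(w_i)\setminus\{\phi(x),\phi(y)\}$, each of size at least~$1$; a pair is properly colorable whenever its two residual lists are not the same singleton.

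The main obstacle is to avoid such a singleton-collision simultaneously for both pairs. A collision on $(w_1,w_2)$ forces $L'(w_1)=L'(w_2)$ of size exactly~$3$ with $\{\phi(x),\phi(y)\}$ contained in this common list, and analogously for $(w_3,w_4)$. I would exploit the slack $|L'(x)|,|L'(y)|\geq 4 > 3$: if $L'(w_1)=L'(w_2)=\{a,b,c\}$, then $L'(x)\setminus\{a,b,c\}$ is nonempty, so choosing $\phi(x)$ from it rules out the collision on the first pair regardless of $\phi(y)$. A short symmetric case analysis---swapping the roles of $x$ and $y$ and of the two pairs, and using that any $L'(y)$ of size~$\geq 4$ meets the complement of any fixed size-$3$ set---then handles the simultaneous tight configuration and yields the desired contradiction with the choice of $G$.
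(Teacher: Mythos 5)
There is a genuine gap: you never rule out the possibility that a $2$-neighbor of $x$ and a $2$-neighbor of $y$ share their far endpoint, i.e.\ that $u_i=u_j$ for some $i\in\{1,2\}$ and $j\in\{3,4\}$. Lemma~\ref{pendent:r}-(4) only gives $u_1\neq u_2$ and $u_3\neq u_4$, since those pairs are weak neighbors of the same vertex; it says nothing about, say, $u_1$ versus $u_3$. If $u_1=u_3$, then $w_1$ and $w_3$ have a common neighbor, so $w_1w_3$ is an edge of $G^2[S]$ and your structural claim that $G^2[S]$ is the join of $xy$ with the matching $\{w_1w_2,w_3w_4\}$ (equivalently $K_6$ minus a $4$-cycle) is false. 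Your coloring scheme then breaks exactly where it relies on that structure: the two pairs can no longer be colored \emph{independently} from residual singleton lists, since $w_1$ and $w_3$ must also receive distinct colors (and indeed must, together with the remaining $H$-neighbor of $u_1=u_3$, give $u_1$ three distinct colors in its neighborhood). The paper disposes of this case before setting up $S$: if $u_1=u_3$, the vertices $x,y,w_3,u_1,w_1,w_2$ induce the configuration of Figure~\ref{C5-special}-(a), which is excluded by Lemma~\ref{C5-subgraph}; only then are all of $u_1,u_2,u_3,u_4$ known to be distinct and the $G^2[S]$ structure as you describe it.

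Apart from that omission, your setup ($S$, the rules defining $L'$, and the counts $|L'(x)|,|L'(y)|\ge 4$, $|L'(w_i)|\ge 3$) matches the paper, and your endgame for the $(4,4,3,3,3,3)$ lists on $K_6$ minus a $4$-cycle can be made to work: the only dangerous situation for a pair is two equal lists of size exactly $3$ both containing $\phi(x)$ and $\phi(y)$, and the slack $|L'(x)|,|L'(y)|\ge 4$ does let one escape it, though the ``short symmetric case analysis'' you defer does need to be written out (note that if $\phi(x)\notin L'(w_3)=L'(w_4)$ the second pair is already safe, which resolves the apparent simultaneous-tightness conflict). The paper takes a slightly different and cleaner route here: it first colors $w_1$ and $w_3$ (one $2$-neighbor of each of $x$ and $y$, nonadjacent in $G^2[S]$) so that $x$ keeps at least three available colors, and then finishes the remaining four vertices via the precomputed $K_4$-minus-an-edge case, Remark~\ref{basic-lemma}-(a), with list sizes $(3,2,2,2)$. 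You should either import the paper's exclusion of $u_1=u_3$ via Lemma~\ref{C5-subgraph} or supply a separate argument for that configuration; as written, the proof does not cover it.
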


\begin{figure}[h!]
 \centering
 \begin{tikzpicture}[thick,scale=0.9]
        \path (-2,1) coordinate (u1)  (-2,-1) coordinate (u2)  (3,1) coordinate (u3)    (3,-1) coordinate (u4)(-1,0.5) coordinate (v1)  (-1,-0.5) coordinate (v2)  (2,0.5) coordinate (v3)    (2,-0.5) coordinate (v4) (0,0) coordinate (x) (1,0) coordinate (y);
        \path (v1)edge (u1) edge (x); \path (v2)edge (u2) edge (x);
        \path (v3)edge (u3) edge (y); \path (v4)edge (u4) edge (y);\path (x) edge (y);
        \path (u4)edge (3.3,-1) edge (3.3,-0.7) edge (3.3,-1.3);
        \path (u3)edge (3.3,1) edge (3.3,1.3) edge (3.3,.7);
        \path (u2)edge (-2.3,-1) edge (-2.3,-0.7) edge (-2.3,-1.3);
        \path (u1)edge (-2.3,1) edge (-2.3,1.3) edge (-2.3,.7);
        \fill  (x) node[above]{$x$} (y) node[above]{$y$}(u2) node[right]{$u_2$}(u1) node[above]{$u_1$}
        (u3) node[left]{$u_3$}(u4) node[above]{$u_4$}
        (v1) node[above]{$v_1$} (v3) node[above]{$v_3$}
        (v2) node[above]{$v_2$} (v4) node[above]{$v_4$};
        \fill
        (v1) circle (3pt) (v2) circle (3pt)(v3) circle (3pt)(v4) circle (3pt) (x) circle (3pt)(y) circle (3pt);
         \filldraw[fill=white!80!gray!20!]  (u1) circle (3pt) (u2) circle (3pt)(u3) circle (3pt)(u4) circle (3pt);
    \end{tikzpicture}
    \caption{An illustration of [\textbf{C3}] (Lemma~\ref{no-two-W2-adjacent}), $x,y\in W_2$}\label{fig:C3}
    \end{figure}
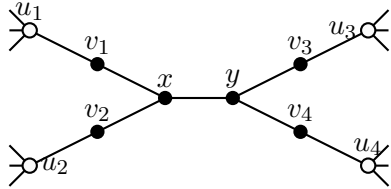

\begin{proof}
Suppose that there are two vertices $x$ and $y$ in $W_2$ that are adjacent.
That is, $x$ and $y$ are $3$-vertices and both $x$ and $y$ have exactly two $2$-neighbors.
Let $v_1, v_2$ be the $2$-neighbors of $x$ and let $v_3, v_4$ be the $2$-neighbors of $y$.
Let $u_i$ be the $3^+$-neighbor of $v_i$ for $i \in \{1, 2, 3, 4 \}$  (see Figure~\ref{fig:C3}).

By Lemma~\ref{pendent:r}-(4), $u_1\neq u_2$ and $u_3\neq u_4$.
If $u_1 = u_3$, then $x, y, v_3, u_1, v_1, v_2$ form the induced subgraph in Figure \ref{C5-special}-(a), this is impossible by Lemma \ref{C5-subgraph}.
Thus $u_1, u_2, u_3, u_4$ are all distinct.
Let $S = \{x, y, v_1, v_2, v_3, v_4 \}$ and let $H=G-S$.
Since $G$ is a minimal counterexample and $H$ is smaller than $G$, $H$ is 3-dynamically $L$-colorable.
Thus there is a 3-dynamic  coloring $\phi$ of $H$ such that $\phi(a)\in L(a)$ for any $a\in V(H)$.
For $a\in S$,  let $L'(a)$ be a subset of $L(a)$, which makes $\phi$ extended to a $3$-dynamic coloring of $G$.
Then
\[|L'(x)|\ge4, \quad\ |L'(y)| \ge 4,\quad \mbox{and} \ |L'(v_i)| \ge 3 \  \mbox{for } i \in [4].\]
Since $|L'(v_1)|+|L'(v_3)|>|L'(x)|$, we can give colors to $v_1$ and $v_3$ so that the number of available colors remained at $x$ is at least 3.
Then $G^2[\{x, y, v_2, v_4\}]$  form $K_4$ minus an edge $v_2v_4$ as in Figure~\ref{K4_e}-(a), and the numbers of available colors are $3$, $2$, $2$, $2$, respectively.  By (a) of Remark \ref{basic-lemma}, it is colorable.
This implies that $G$ is 3-dynamically $L$-colorable, a contradiction.
\end{proof}

From Lemma \ref{pendent:r}-(4), a vertex in $W_3$ has three weak neighbors.

\begin{lemma} \label{three-nbr}
\rm{[\textbf{C4}]} If $x\in W_3$ and $y$ is a weak neighbor of $x$, then $y \in W_1$.
That is, $x$ has three weak neighbors in $W_1$.
\end{lemma}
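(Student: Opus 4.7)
My plan is to argue by contradiction: suppose some weak neighbor $y$ of $x\in W_3$ has $y\notin W_1$. Let $v_1$ be the 2-vertex common to $x$ and $y$, let $v_2,v_3$ be the other two 2-neighbors of $x$, and let $y_2,y_3$ be their other endpoints. By Lemma~\ref{pendent:r}(4) and Lemma~\ref{C5-subgraph} (which rules out Figure~\ref{C5-special}(a)), the weak neighbors $y,y_2,y_3$ are three distinct $3^+$-vertices, each different from $x$. Since $\deg(y)\ge 3$ by [\textbf{C1}] and [\textbf{C2}], the assumption $y\notin W_1$ yields either (Case~1) $\deg(y)\ge 4$, or (Case~2) $\deg(y)=3$ and $y$ has a second 2-neighbor $v_1'$ (so $y\in W_2\cup W_3$). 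In both cases I will delete a set $S$, 3-dynamically $L$-color $H=G-S$ by the minimality of $G$, and extend by list-coloring $G^2[S]$ under the residual lists $L'$, following the template of Lemmas~\ref{no-two-W2-adjacent} and \ref{C5-subgraph}.

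In Case~1 I take $S=\{x,v_1,v_2,v_3\}$. Because $d_H(y)\ge 3$, the 3-dynamic constraint at $y$ is automatic in $H$, so only $\phi(y)$ is forbidden at $v_1$, giving $|L'(v_1)|\ge 5$ while routine bookkeeping yields $|L'(x)|,|L'(v_2)|,|L'(v_3)|\ge 3$. Since $G^2[S]\cong K_4$, Hall's theorem produces an $L'$-list-coloring (the only non-trivial subset check, on the 4-element subset, is covered by $|L'(v_1)|\ge 5$), and extending $\phi$ contradicts minimality.

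In Case~2 the elementary four-vertex reduction fails because $K_4$ with four size-3 lists is not always list-colorable, so I enlarge $S$ to $\{x,v_1,v_2,v_3,y,v_1'\}$ if $y\in W_2$ and to $\{x,v_1,v_2,v_3,y,v_1',v_1''\}$ if $y\in W_3$ (where $v_1''$ is the third 2-neighbor of $y$). Absorbing $y$ into $S$ places both neighbors of $v_1$ in $S$, so the only $H$-based constraint on $\phi(v_1)$ is $\phi(v_1)\ne\phi(z_1)$, where $z_1$ is $y$'s unique $3^+$-neighbor (present only in the $W_2$ sub-case); hence $|L'(v_1)|\ge 5$ in the $W_2$ sub-case and $|L'(v_1)|\ge 6$ in the $W_3$ sub-case. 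Further bookkeeping gives $|L'(x)|\ge 4$, $|L'(v_2)|,|L'(v_3)|\ge 3$, and the lists at $y,v_1',v_1''$ have sizes at least $2$ (in the $W_2$ sub-case) or at least $3$ (in the $W_3$ sub-case). In the $W_3$ sub-case, $G^2[S]$ is two copies of $K_4$ sharing $v_1$ together with the edge $xy$, every vertex has list size at least its $G^2[S]$-degree, and $G^2[S]$ is connected and neither complete nor an odd cycle, so the Erd\H{o}s--Rubin--Taylor degree-choosability theorem supplies the required list-coloring. In the $W_2$ sub-case the single shortfall $|L'(y)|=2<3=\deg_{G^2[S]}(y)$ is handled by exploiting the non-$G^2$-adjacencies in $N_{G^2[S]}(v_1)$, in particular that $v_2,v_3$ are not $G^2$-adjacent to $y$ or $v_1'$: forcing a color coincidence such as $\phi(v_2)=\phi(v_1')$ (when $L'(v_2)\cap L'(v_1')\ne\emptyset$, with fallbacks $\phi(x)=\phi(v_1')$ or $\phi(v_3)=\phi(v_1')$ otherwise) reduces the color footprint of $N_{G^2[S]}(v_1)$ below $|L'(v_1)|=5$, after which a greedy ordering completes the coloring.

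The main obstacle is Case~2, and particularly its $W_2$ sub-case: even after enlarging $S$ the list at $y$ remains strictly shorter than its $G^2[S]$-degree, so the standard degree-choosability theorem does not apply, and one must resort to forcing a color coincidence among non-$G^2$-adjacent vertices in the neighborhood of $v_1$ to absorb the shortfall.
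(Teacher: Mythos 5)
Your Case~1 ($\deg(y)\ge 4$, delete $S=\{x,v_1,v_2,v_3\}$, get $K_4$ with lists $5,3,3,3$) is exactly the paper's first step and is correct. Your Case~2, however, diverges from the paper and is where the trouble is. The paper handles ``$y$ is a $3$-vertex with a second $2$-neighbor $w$'' far more economically: it deletes only $S=\{x,v_1,y,w\}$, observes that $G^2[S]$ is $K_4$ minus the edge $xw$ with residual lists of sizes at least $2,3,2,2$ (the size-$3$ list sitting on the degree-$3$ vertex $v_1$), and finishes by Remark~\ref{basic-lemma}(a); there is no need to split into $W_2$ and $W_3$ sub-cases or to carry $v_2,v_3$ and all of $y$'s $2$-neighbors along. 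Your $W_3$ sub-case does go through: the graph you describe (two $K_4$'s glued at $v_1$ plus the edge $xy$) is $2$-connected, neither complete nor an odd cycle, and your list bounds meet every degree, so degree-choosability applies. It is just much heavier machinery than the paper needs.

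The genuine gap is in your $W_2$ sub-case. First, your repair strategy is not exhaustive: all three of your proposed coincidences $\phi(v_2)=\phi(v_1')$, $\phi(x)=\phi(v_1')$, $\phi(v_3)=\phi(v_1')$ can be simultaneously unavailable, namely when $L'(v_1')$ is disjoint from $L'(x)\cup L'(v_2)\cup L'(v_3)$ (nothing in the construction of $L'$ prevents this), and your argument gives no instruction in that event. Second, you never address the other bottleneck vertex: $y$ has $|L'(y)|\ge 2$ but three neighbors $x,v_1,v_1'$ in $G^2[S]$, so a ``greedy ordering'' that saves a color at $v_1$ can still strand $y$ unless $y$ is colored while at most one of its $G^2[S]$-neighbors is colored; this needs to be said and checked against whichever coincidence you force. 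The configuration is in fact reducible with your stated list sizes --- for instance, color $y$ and $v_1'$ first and note that the remaining $K_4$ on $\{x,v_1,v_2,v_3\}$, whose residual lists all have size at least $3$, fails Hall's condition only if all four residual lists are equal to one $3$-set, and one can always choose $\phi(y)\in L'(y)$, $\phi(v_1')\in L'(v_1')$ to destroy that coincidence since $|L'(v_1)|\ge 5$ and $|L'(x)|\ge 4$ --- but that argument is not in your proposal, and as written the sub-case does not close. Given that the paper's four-vertex reduction disposes of this case in two lines, I would recommend replacing your Case~2 entirely rather than patching it.
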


\begin{figure}[h!]
 \centering
 \begin{tikzpicture}[thick,scale=0.9]
        \path (-2,0) coordinate (u3)  (-1,0) coordinate (v3)  (0,0) coordinate (x)    (0,-0.6) coordinate (v2) (0,-1.2) coordinate (u2)  (1,0) coordinate (v1)  (2,0) coordinate (u1)    (2,-0.6) coordinate (w) (2,-1.2) coordinate (1) (3,0) coordinate (2);
        \path (x)edge (2) edge (u2) edge (u3); \path (u1)edge (1);
        \path (1)edge (2,-1.8) edge (2.3,-1.8) edge (1.7,-1.8);
        \path (u2)edge (0,-1.8) edge (0.3,-1.8) edge (-0.3,-1.8);
        \path (2)edge (3.3,0) edge (3.3,.3) edge (3.3,-0.3);
        \path (u3)edge (-2.3,0) edge (-2.3,.3) edge (-2.3,-0.3);
        \fill  (x) node[above]{$x$} (w) node[left]{$w$}
        (v1) node[above]{$v_1$} (v3) node[above]{$v_3$}(u2) node[left]{$u_2$}(u1) node[above]{$u_1$}
        (v2) node[left]{$v_2$} (u3) node[above]{$u_3$};
        \fill  (u1) circle (3pt)
        (v1) circle (3pt) (v2) circle (3pt)(v3) circle (3pt) (x) circle (3pt)(w) circle (3pt);
       \filldraw[fill=white!80!gray!20!]   (u2) circle (3pt) (u3) circle (3pt) (1) circle (3pt) (2) circle (3pt);
    \end{tikzpicture}
        \caption{An illustration of [\textbf{C4}] (Lemma~\ref{three-nbr}), $x\in W_3$, $u_1\not\in W_1$ }\label{fig:C4}
    \end{figure}
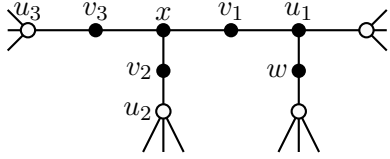

\begin{proof} Let $x$ be a $3$-vertex in $W_3$ and let $v_1, v_2, v_3$ be the $2$-neighbors of $x$.
Let $u_i$ be the other neighbor of $v_i$ for each $i\in [3]$ (so they are weak neighbors of $x$).
By [\textbf{C2}], each $u_i$ is a $3^+$-vertex.
By Lemma~\ref{pendent:r}-(4), three vertices $u_1$, $u_2$, $u_3$ are distinct.

First, we will show that $u_i$ is a 3-vertex for each $i\in [3]$.
Suppose that some $u_i$ is not a $3$-vertex for some $i\in [3]$. Without loss of generality, assume that $u_1$ is not a 3-vertex.
Then $u_1$ is a $4^+$-vertex by [\textbf{C2}].
Let $S=\{x,v_1,v_2,v_3\}$ and $H=G-S$. Then $H$ is 3-dynamically $L$-colorable,
since $G$ is a minimal counterexample and $H$ is smaller than $G$.
Thus there is a 3-dynamic coloring $\phi$ of $H$ such that $\phi(a)\in L(a)$ for any $a\in V(H)$.
For $a\in S$,  let $L'(a)$ be a subset of $L(a)$, which makes $\phi$ extended to a $3$-dynamic coloring of $G$.
Then \[|L'(v_1)| \geq 5, \ |L'(v_2)| \geq 3, \ |L'(v_3)| \geq 3 \ \mbox{ and } \ |L'(x)| \geq 3.\]
Since $G^2[S]$ is $K_4$,  $G^2[S]$ is $f_S$-choosable.  This implies that $\phi$ can be extended to  a 3-dynamic coloring of $G$  so that $G$ is 3-dynamically $L$-colorable, a contradiction.
Hence $u_i$ is a 3-vertex for each $i\in [3]$.

\medskip
Next, we will show that every $u_i$ is in $W_1$, which means that
$x$ is the only weak neighbor of $u_1$.
Without loss of generality, we may assume that $u_1$ is has another $2$-neighbor $w$ other than $v_1$.
Then all 8 vertices $x$, $v_i$'s, $u_i$'s, $w$ are distinct (See Figure~\ref{fig:C4}).
Note that $x$ and $w$ cannot have a common neighbor as all neighbors of $x$ are $v_i$'s.
Let $S = \{x, v_1,u_1,w\}$.
Since $G$ is a minimal counterexample and $H$ is smaller than $G$, $H$ is
3-dynamically  $L$-colorable.
Thus there is a 3-dynamic  coloring $\phi$ of $H$ such that $\phi(a)\in L(a)$ for any $a\in V(H)$.
{For $a\in S$,  let $L'(a)$ be a subset of $L(a)$, which makes $\phi$ extended to a $3$-dynamic coloring of $G$.}
Then  $G^2[S]$ is the graph in Figure~\ref{K4_e}-(a) and
\[  |L'(x)|\ge 2,  \  |L'(v_1)|\ge 3,  \  |L'(u_1)| \ge 2, \   |L'(w)|\ge 2.\]
By (a) of Remark~\ref{basic-lemma}, $G^2[S]$ is $L'$-colorable.
This implies that $G$ is 3-dynamically $L$-colorable, a contradiction.
\end{proof}


\begin{lemma} \label{W1-two-W2-neighbors}
\rm{[\textbf{C5}]} There is no vertex in $W_1$ which has two neighbors in $W_2$.
\end{lemma}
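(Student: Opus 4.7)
Assume for contradiction that some $v\in W_1$ has two neighbors $x,y\in W_2$. Let $w$ be the unique $2$-neighbor of $v$, with other neighbor $u$; let $v_1,v_2$ be the two $2$-neighbors of $x$, with other neighbors $u_1,u_2$; and let $v_3,v_4$ be the two $2$-neighbors of $y$, with other neighbors $u_3,u_4$. My first task is to verify that the thirteen labeled vertices are pairwise distinct. Lemma~\ref{pendent:r}(4) gives $u_1\ne u_2$ and $u_3\ne u_4$, and Lemma~\ref{pendent:r}(3) rules out $w$ coinciding with any of the $v_i$ (as $v\sim x$ and $v\sim y$). A cross-group coincidence such as $u_1=u_3$ would produce the 6-cycle $v$-$x$-$v_1$-$u_1$-$v_3$-$y$-$v$ carrying pendant 2-paths $w$-$u$, $v_2$-$u_2$, $v_4$-$u_4$ at $v$, $x$, $y$ respectively, which is precisely the induced configuration forbidden by Lemma~\ref{C5-subgraph} (Figure~\ref{C5-special}(b)); the remaining cross-coincidences are handled analogously.

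Set $S=\{v,w,x,y,v_1,v_2,v_3,v_4\}$ and $H=G-S$. Because $mad(H)\le mad(G)<18/7$ and $H$ is smaller than $G$, the minimality of $G$ yields a 3-dynamic $L$-coloring $\phi$ of $H$. Using the standard reduction rules introduced in the proof of Lemma~\ref{C5-subgraph} (remove from $L(s)$ the $\phi$-colors of $s$'s neighbors in $H$, up to two extra colors to secure the 3-dynamic condition at each $H$-neighbor of $s$, and the $\phi$-colors of $H$-vertices sharing a 2-neighbor with $s$ in $G$), one obtains reduced lists $L'(s)\subseteq L(s)$ satisfying
\[
|L'(v)|\ge 5,\qquad |L'(x)|,|L'(y)|\ge 4,\qquad |L'(w)|,|L'(v_i)|\ge 3\quad (i\in[4]).
\]

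The final step is to observe that $G^2[S]$ is isomorphic to the graph $H_2$ of Figure~\ref{K4_e}(b) via the mapping $v\mapsto x$, $x\mapsto x_1$, $y\mapsto x_2$, $w\mapsto w$, $v_i\mapsto v_i$: the direct edges of $G[S]$ account for $xx_1,xx_2,xw,x_1v_1,x_1v_2,x_2v_3,x_2v_4$, and the distance-2 edges through $v,x,y$ account for the remaining edges of $H_2$. The list sizes above satisfy the $f_2$-choosability hypothesis of Remark~\ref{basic-lemma}(b), so $G^2[S]$ admits an $L'$-coloring, which combined with $\phi$ yields a 3-dynamic $L$-coloring of $G$, contradicting the choice of $G$.

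I expect the main obstacle to be the distinctness argument, in particular for coincidences like $v_1=v_3$, which would create only a $4$-cycle $x$-$v_1$-$y$-$v$-$x$ and so do not immediately match the $5$- or $6$-cycle configurations forbidden by Lemma~\ref{C5-subgraph}. Such a coincidence, however, forces $u_1=y$ and $u_3=x$, so the resulting $S$ has seven vertices and $G^2[S]$ is no longer $H_2$; on the other hand, removing fewer weak-neighbor colors makes the reduced lists strictly larger (one checks $|L'(x)|,|L'(y)|\ge 5$ and $|L'(z)|\ge 6$ for $z=v_1=v_3$), and a short ad hoc ordering of the coloring suffices to finish this degenerate subcase.
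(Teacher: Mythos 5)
Your proof is correct and follows essentially the same route as the paper's: the same set $S=\{v,w,x,y,v_1,\dots,v_4\}$, the same appeal to Lemma~\ref{C5-subgraph} to force distinctness of the outer $3^+$-vertices, the same reduced list sizes, and the same identification of $G^2[S]$ with the graph $H_2$ of Remark~\ref{basic-lemma}(b). Your explicit treatment of the degenerate coincidence $v_1=v_3$ (which Lemma~\ref{pendent:r}(3) does not exclude, since $x$ and $y$ are nonadjacent) is a point the paper passes over silently, and your greedy-ordering fix for that subcase checks out.
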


  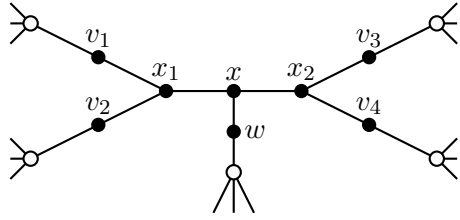
\begin{figure}[h!]\captionsetup{width=1.5\linewidth}
 \centering
  \begin{tikzpicture}[thick,scale=0.9]
        \path (-2,1) coordinate (1)  (-2,-1) coordinate (2)  (4,1) coordinate (3)    (4,-1) coordinate (4)(-1,0.5) coordinate (v1)  (-1,-0.5) coordinate (v2)  (3,0.5) coordinate (v3)    (3,-0.5) coordinate (v4) (0,0) coordinate (x1) (1,0) coordinate (x) (2,0) coordinate (x2) (1,-0.6) coordinate (w) (1,-1.2) coordinate (5);
        \path (1)  edge (x1); \path (3)edge  (x2);
        \path (2)  edge (x1); \path (4)edge (x2);\path (x1) edge (x2);\path (x) edge (5);
        \path (4)edge (4.3,-1) edge (4.3,-0.7) edge (4.3,-1.3);
        \path (3)edge (4.3,1) edge (4.3,1.3) edge (4.3,.7);
        \path (2)edge (-2.3,-1) edge (-2.3,-0.7) edge (-2.3,-1.3);
        \path (1)edge (-2.3,1) edge (-2.3,1.3) edge (-2.3,.7);
       \path (5)edge (1,-1.8) edge (1.3,-1.8) edge (.7,-1.8);
        \fill  (x) node[above]{$x$} (w) node[right]{$w$}(x1) node[above]{$x_1$}
        (v1) node[above]{$v_1$} (v3) node[above]{$v_3$}(x2) node[above]{$x_2$}
        (v2) node[above]{$v_2$} (v4) node[above]{$v_4$};
        \fill
        (v1) circle (3pt) (v2) circle (3pt)(v3) circle (3pt)(v4) circle (3pt) (x) circle (3pt)(x2) circle (3pt)(x1) circle (3pt)(w) circle (3pt);
               \filldraw[fill=white!80!gray!20!] (1) circle (3pt) (2) circle (3pt)(3) circle (3pt)(4) circle (3pt) (5) circle (3pt);
    \end{tikzpicture}
   \caption{An illustration of [\textbf{C5}] (Lemma~\ref{W1-two-W2-neighbors}), $x\in W_1$, $x_1,x_2\in W_2$ }\label{fig:C5}
    \end{figure}

\begin{proof}
Suppose that there is a vertex $x\in W_1$ such that $x$ has two neighbors $x_1 ,x_2 \in W_2$.
Then all 5 vertices in $N_G(x)\cup N_G(x_1)\cup N_G(x_2)-\{x_1,x_2\}$ are $2$-vertices.
We label those vertices as in Figure~\ref{fig:C5}.
Let $S=\{x, x_1, x_2, v_1, v_2, v_3, v_4, w \}$.
Let $u_i$ be the neighbor of $v_i$ other than $x_1$ and $x_2$ for each $i\in [4]$,
and $w'$ be the neighbor of $w$ other than $x$.
Note that  $u_1\neq u_2$ and $u_3\neq u_4$ by Lemma~\ref{pendent:r}-(4).

Then $w'\neq u_1$, otherwise the five vertices $w', w, v_1, x_1, x$ form a cycle $C_5$ and together with the three vertices $v_2$, $u_2$, $x_2$, they form the induced subgraph in Figure~\ref{C5-special}-(a).
Hence, $w'\neq u_i$ for all $i\in [4]$.
If $u_1=u_3$, then we have the graph  in Figure~\ref{C5-special}-(b), which is a contradiction.
Thus $u_1, u_2,u_3,u_4$, and $w'$ are distinct.

Let $H=G-S$.
Since $G$ is a minimal counterexample and $H$ is smaller than $G$,
$H$ is 3-dynamically  $L$-colorable.
Thus there is a 3-dynamic  coloring $\phi$ of $H$ such that $\phi(a)\in L(a)$ for any $a\in V(H)$.
{For $a\in S$,  let $L'(a)$ be a subset of $L(a)$, which makes $\phi$ extended to a $3$-dynamic coloring of $G$.}
Then
\[ |L'(x)|\ge 5,\ |L'(x_1)|\ge 4, \ |L'(x_2)|\ge 4, \ |L'(w)|\ge 3,\ |L'(v_i)|\ge3 \ (\mbox{for } i \in [4]),\]
and $G^2[S]$ is   graph $H_2$ in Figure~\ref{K4_e}-(b).
    By (b) of Remark~\ref{basic-lemma}, it is colorable.
This implies that $G$ is 3-dynamically $L$-colorable, a contradiction.
\end{proof}

\begin{lemma} \label{C6:-nbr-W1W2W3}
\rm{[\textbf{C6}]} There is no 3-vertex which has one neighbor in $W_1$, one neighbor in $W_2$, and one weak neighbor in $W_3$.
\end{lemma}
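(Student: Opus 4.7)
The plan is to follow the reducibility template of Lemmas~\ref{no-two-W2-adjacent}--\ref{W1-two-W2-neighbors}: identify a local vertex set $S\subseteq V(G)$, pass to the strictly smaller subgraph $H:=G\setminus S$, use minimality to extract a 3-dynamic $L$-coloring $\phi$ of $H$, and then extend $\phi$ to $G$ by list-coloring $G^2[S]$ against restricted lists $L'$ produced by rules (1)--(4). Suppose for contradiction that $x$ is a 3-vertex witnessing [\textbf{C6}]: its neighbors are $y_1\in W_1$, $y_2\in W_2$, and a 2-vertex $v$ whose other endpoint is the weak neighbor $z\in W_3$. Let $a_1$ (resp. $a_2$) be the unique 2-neighbor (resp. $3^+$-neighbor) of $y_1$ besides $x$; let $b_1,b_2$ be the two 2-neighbors of $y_2$; let $c_1,c_2$ be the 2-neighbors of $z$ other than $v$; and denote the outer $3^+$-endpoints of $a_1,b_1,b_2,c_1,c_2$ by $a_1',b_1',b_2',c_1',c_2'$. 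Since $v$ is the unique 2-neighbor of $x$, $x\in W_1$; applying Lemma~\ref{three-nbr} at $z\in W_3$ gives $c_1',c_2'\in W_1$ as well. Using Lemma~\ref{pendent:r}(3)--(4), Lemma~\ref{C5-subgraph}, and the already-proved [\textbf{C3}]--[\textbf{C5}], I will show the ten core vertices are pairwise distinct and that no coincidence among the outer endpoints produces a forbidden induced subgraph.

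Set $S=\{x,y_1,y_2,z,v,a_1,b_1,b_2,c_1,c_2\}$ and let $\phi$ be a 3-dynamic $L$-coloring of $H=G\setminus S$, guaranteed by minimality. A direct application of rules (1)--(4) delivers the bounds
\[
|L'(x)|\ge 6,\ \ |L'(v)|\ge 6,\ \ |L'(y_2)|\ge 4,\ \ |L'(z)|\ge 4,\ \ |L'(y_1)|\ge 2,
\]
\[
|L'(a_1)|,\,|L'(b_1)|,\,|L'(b_2)|,\,|L'(c_1)|,\,|L'(c_2)|\ge 3.
\]
The bounds at $x$ and $v$ are $6$ because all of their $G$-neighbors lie in $S$, so no color is forbidden; $y_2$ and $z$ each lose only the two colors forbidden by rule~(4) through their 2-neighbors in $S$; each 2-vertex in $S$ loses at most three colors from its single $H$-neighbor via rules~(1) and (3); and the bottleneck $y_1$ loses three colors via rules~(1) and (3) through its unique $H$-neighbor $a_2$, plus $\phi(a_1')$ via rule~(4).

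It remains to list-color $G^2[S]$ against $L'$. The structure of $G^2[S]$ consists of two $K_4$ gadgets induced on $\{x,y_2,b_1,b_2\}$ and $\{v,z,c_1,c_2\}$, joined by the $G^2$-edges $xv,\,xz,\,vy_2$, together with a pendant edge $y_1a_1$ attached at $x$ and extra $G^2$-edges $y_1y_2, y_1v$ through $x$. I plan to color in the order $y_1, a_1, z, c_1, c_2, v, y_2, x, b_1, b_2$, handling the small-list bottleneck $y_1$ first. The pendant pair $\{y_1,a_1\}$ is chosen freely from lists of sizes $2$ and $3$; the right gadget $\{z,c_1,c_2\}$ is colorable from lists of sizes $\ge 4,3,3$; $v$ (list $6$) loses at most $4$ previously committed colors; $y_2$ (list $4$) loses at most $2$; and $x$ (list $6$) loses at most $5$ colors because the $G^2$-non-adjacencies between $\{y_1,a_1\}$ and $\{z,c_1,c_2\}$ force color repetitions.

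The main obstacle will be the closing pair $b_1, b_2$: each has list size exactly $3$ and three $G^2[S]$-neighbors $y_2, x, b_{3-i}$ that are pairwise distinct in color, so a naive greedy leaves nothing at $b_2$. The remedy is to select $\phi(b_1)\in L'(b_1)\setminus\{\phi(y_2),\phi(x)\}$ with the extra requirement that $L'(b_2)\setminus\{\phi(y_2),\phi(x),\phi(b_1)\}\neq\emptyset$, which is feasible whenever $|L'(b_1)|+|L'(b_2)|\ge 5$; our bounds give $3+3=6$, so a joint choice exists unless the two reduced lists coincide in a very rigid way. If that rigid case does occur, the induced subgraph of $G^2[S]$ on a suitable seven-vertex subset matches the template $H_3$ of Remark~\ref{basic-lemma}, and the remark applies directly to finish the extension, contradicting the choice of $G$.
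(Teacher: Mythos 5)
Your overall template (delete a local set $S$, extend by list-coloring $G^2[S]$) is reasonable, but the execution has a genuine gap at exactly the point you flag and then wave away: the closing pair $b_1,b_2$. In $G^2[S]$ the set $\{x,y_2,b_1,b_2\}$ induces a $K_4$, so after your ordering colors $x$ and $y_2$, each $b_i$ retains only $|L'(b_i)|-2\ge 1$ colors, and the two residual lists can be the same singleton (e.g.\ $L'(b_1)=L'(b_2)=\{1,2,3\}$ with $\phi(x)=1$, $\phi(y_2)=2$), in which case the edge $b_1b_2$ cannot be colored. Your criterion ``feasible whenever $|L'(b_1)|+|L'(b_2)|\ge 5$'' is applied to the lists \emph{before} the colors of the common neighbors $x,y_2$ are removed; the correct requirement concerns the residual lists, whose sizes can drop to $1+1$ with equality, so $3+3=6$ proves nothing. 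The fallback to Remark~\ref{basic-lemma} is also unsubstantiated: $H_3$ is a specific $8$-vertex graph with prescribed list sizes, and you neither identify the ``suitable seven-vertex subset'' of your $10$-vertex $G^2[S]$ nor check that its adjacencies and list bounds match. Indeed, a direct attempt to finish your reduction runs into a $K_4$ on $\{x,y_2,b_1,b_2\}$ whose effective lists can have sizes $(2,2,3,3)$ or $(3,2,3,3)$, which is not always colorable (Hall's condition can fail), so the difficulty is real and not merely notational. There are also unverified coincidence cases (e.g.\ outer endpoints such as $b_1'$ or $a_2$ having two neighbors in $S$, which creates extra $G^2$-edges inside $S$ and weakens your rule-(2)/(3) accounting).

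The paper resolves the lemma by an entirely different and much lighter mechanism that your proposal is missing. It deletes only the three vertices $\{v_1,v_2,u\}$ (in your notation $\{v,\,$one of $c_1,c_2,\,z\}$), so that $G^2[S]$ is a triangle in which each vertex has at least $2$ available colors; such a triangle fails to be colorable only when all three $2$-lists coincide. The key idea is then a recoloring claim (Claim~\ref{recoloring}): using the hypothesis that $x$ has a neighbor in $W_1$ and a neighbor in $W_2$, one shows that $x$ can be recolored in $H$ while disturbing only $y,z,y_1,y_2,z_1$, and since $\phi(x)$ is forbidden at $v_1$ and at $u$ but not at $v_2$, recoloring $x$ breaks the equality of the lists and the triangle becomes colorable. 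This is where the structural hypotheses of [\textbf{C6}] are actually used; your proposal never exploits them and instead attempts a brute-force $10$-vertex reduction whose final coloring step does not go through as written.
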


\begin{figure}[h!]
 \centering
 \begin{tikzpicture}[thick,scale=0.9]
        \path (-3,0) coordinate (1)  (-2,0) coordinate (y1)  (-1,0) coordinate (y)  (0,0) coordinate (x)    (-1,-0.6) coordinate (y2) (0,-1.4) coordinate (2) (0,-0.5) coordinate (z)   (0,-1) coordinate (z2) (0.5,-0.9) coordinate (z1)  (1,-1.3) coordinate (3)   (1,0) coordinate (v1)  (2,0) coordinate (u)    (2,-0.6) coordinate (v2) (2,-1.2) coordinate (4) (3,0) coordinate (v3) (4,0) coordinate (5);
        \path (y)edge (y2);\path (x) edge (2);\path (u) edge (4); \path (5)edge (1);\path (z1) edge (z) edge (3);
        \path (3)edge (1.3,-1.6) edge (1.5,-1.6) edge (1,-1.6);
        \path (4)edge (2,-1.8) edge (2.3,-1.8) edge (1.7,-1.8);
        \path (2)edge (0,-1.8) edge (0.3,-1.8) edge (-0.3,-1.8);
        \path (5)edge (4.3,0) edge (4.3,.3) edge (4.3,-0.3);
        \path (y2)edge  (-1,-1) edge (-1.3,-1) edge (-0.7,-1);
        \path (1)edge (-3.3,0) edge (-3.3,.3) edge (-3.3,-0.3);
        \fill  (x) node[above]{$x$} (z) node[left]{$y$}(y) node[above]{$z$}(u) node[above]{$u$}
        (z1) node[above]{$y_1$} (z2) node[left]{$y_2$}(v2) node[right]{$v_2$}(v1) node[above]{$v_1$}
        (y2) node[left]{$z_2$} (y1) node[above]{$z_1$} (v3) node[above]{$v_3$};
        \fill   (u) circle (3pt)  (z) circle (3pt)(y) circle (3pt) (y1) circle (3pt)
        (v1) circle (3pt) (v2) circle (3pt)(v3) circle (3pt) (z1) circle (3pt) (x) circle (3pt)(z2) circle (3pt);
   \filldraw[fill=white!80!gray!20!](1) circle (3pt)  (2) circle (3pt) (3) circle (3pt) (4) circle (3pt)
        (5) circle (3pt) (y2) circle (3pt);
    \end{tikzpicture}
    \caption{An illustration of [\textbf{C6}] (Lemma~\ref{C6:-nbr-W1W2W3}), $x,z\in W_1$, $y\in W_2, u\in W_3$   }\label{fig:C6}
    \end{figure}
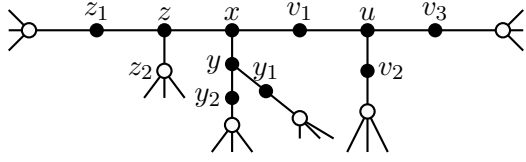

\begin{proof}
Suppose that  there exists a $3$-vertex $x$ which has one neighbor $y$ in $W_2$, one neighbor $z$ in $W_1$, one weak neighbor $u$ in $W_3$. Let $v_1$ be the 2-neighbor of $x$, let $v_2$ and $v_3$ be the other 2-neighbors of $u$.
Let $y_1$ and $y_2$ be two 2-neighbors of $y$ other than $v_1$, and let
$z_1$ and $z_2$ be two neighbors of $z$ other than $x$ (See Figure~\ref{fig:C6}).

Since $H=G-\{v_1,v_2,u\}$ is smaller than $G$, there is a 3-dynamic coloring $\phi$ of $H$  such that $\phi(a)\in L(a)$ for any $a\in V(H)$.
In the graph $H$, the vertex $x$ can be recolored without changing the colors of the other vertices, except 5 vertices $y$, $z$, $y_1$, $y_2$, $z_1$ (see  Figure \ref{recolor-lemma}) by Claim~\ref{recoloring}. (The following  claim appeared in Lemma 17 in \cite{Cranston-Kim-07}.  But, we include here for the sake of completeness.)

\begin{claim} \label{recoloring}
There is a 3-dynamic coloring $\phi'$  of $H$ such that $\phi'(a)\in L(a)$ for all $a\in V(H)$, and $\phi(x)\neq \phi'(x)$, $\phi(a)=\phi'(a)$ for any vertex $a\in V(G)\setminus\{ y, z, y_1, y_2,z_1\}$.
\end{claim}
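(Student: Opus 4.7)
The plan is to restrict all modifications to $V_0:=\{x,y,z,y_1,y_2,z_1\}$, keep $\phi$ fixed on $V(H)\setminus V_0$, and reduce Claim~\ref{recoloring} to a small list-coloring problem on this gadget. For each $v\in V_0$, I would form a residual list $L^{\ast}(v)\subseteq L(v)$ by removing the colors of the external neighbors of $v$ in $H$ together with any colors that would spoil a $3$-dynamic constraint at an external vertex (among $z_2, z_1', y_1', y_2'$) or at an internal $2$-vertex whose other neighbor is external. Using $|L(v)|\ge 6$, I obtain the bounds $|L^{\ast}(x)|\ge 6$, $|L^{\ast}(y)|\ge 4$, $|L^{\ast}(z)|\ge 2$, and $|L^{\ast}(y_1)|,|L^{\ast}(y_2)|,|L^{\ast}(z_1)|\ge 3$. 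The remaining constraints are the purely internal ones: the adjacency edges of $H[V_0]$ and the $3$-dynamic conditions at $x$, $y$, and $z$ restricted to their $V_0$-neighbors.

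Next, I would color $V_0$ greedily in the order $z, z_1, y_1, y_2, y, x$. At each of the first five steps, the number of already-colored vertices in the square-neighborhood is strictly smaller than $|L^{\ast}(v)|$, so at least one valid color survives at every step. When $x$'s turn comes, the union of forbidden colors consists of the five colors now at $\{y,z,y_1,y_2,z_1\}$ together with $\phi(x)$, a total of at most six colors in a list of size at least six; this leaves a valid choice $\phi'(x)\neq\phi(x)$ unless the six forbiddens are pairwise distinct and exactly fill $L(x)$.

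In that tight boundary case, I would exploit the freedom left in the earlier steps. Specifically, if $\phi(x)\in L^{\ast}(v)$ for some $v\in\{y_1,y_2,z_1\}$ and using $\phi(x)$ at $v$ is consistent with $v$'s internal constraints, I set $\phi'(v)=\phi(x)$, which identifies $\phi(x)$ with one of the already-forbidden colors at $x$ and thereby shrinks the effective forbidden set at $x$ to five distinct colors, opening a valid choice. Otherwise $\phi(x)\notin L^{\ast}(y_1)\cup L^{\ast}(y_2)\cup L^{\ast}(z_1)$, a very rigid situation forcing $\phi(x)$ to coincide with one of the external dynamic witnesses $\phi(y_1'),\phi(y_2'),\phi(z_1')$ or to lie outside these three lists entirely; in this residual subcase I would resort to a local Kempe-type swap along the path $y_i$--$y$--$x$ or $z_1$--$z$--$x$, recoloring two adjacent vertices simultaneously to free a color for $x$. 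The main obstacle is verifying this final swap against all $3$-dynamic constraints and confirming that the remaining degrees of freedom suffice, a bookkeeping I expect to carry out along the lines of Lemma~17 of \cite{Cranston-Kim-07}.
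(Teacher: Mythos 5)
Your overall strategy (freeze $\phi$ outside the six-vertex gadget, form residual lists, and finish greedily) is the same as the paper's, but the chosen order and the bookkeeping contain a genuine off-by-one error that breaks the final step. The dynamic condition at $z$ is over \emph{all} of $N_H(z)=\{x,z_1,z_2\}$, and $z_2$ is an already-colored external $3^+$-vertex; restricting that condition ``to the $V_0$-neighbors'' of $z$, as you do, silently drops the requirements $\phi'(x)\neq\phi(z_2)$ and $\phi'(z_1)\neq\phi(z_2)$. Consequently $|L^{\ast}(x)|\ge 5$ (not $6$) and $|L^{\ast}(z_1)|\ge 2$ (not $3$), matching the paper's counts. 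With $x$ colored last, the forbidden set at $x$ is $\{\phi'(y),\phi'(z)\}$ (adjacency) $\cup\{\phi'(y_1),\phi'(y_2)\}$ (dynamic condition at $y$) $\cup\{\phi'(z_1),\phi(z_2)\}$ (dynamic condition at $z$) $\cup\{\phi(x)\}$ --- up to seven values against a list of size six. Your ``tight boundary case'' analysis is predicated on at most six forbidden colors, so it does not cover the actual worst case, and the residual Kempe-swap subcase is explicitly left unverified, so the argument does not close.

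The paper avoids this by \emph{not} saving $x$ for last: it colors in the order $y$, $z$, $z_1$, $x$, $y_1$, $y_2$, so that when $x$ is reached only three of its square-neighbors are colored and $|L'(x)|\ge 2$, guaranteeing a choice different from $\phi(x)$. The price is that $y_1,y_2$ must be colorable at the end, and the paper pays it with the standard trick you did not use: $y$ is given a color $c\notin L'(y_1)$ (if none exists then $L'(y)\subseteq L'(y_1)$ and $y_1$'s list is large anyway), so $y_1$ retains three available colors and survives after $x$ and $y_2$ are fixed. If you reorder your greedy pass accordingly and add that one preliminary choice for $y$, your proof goes through without any case analysis or swaps.
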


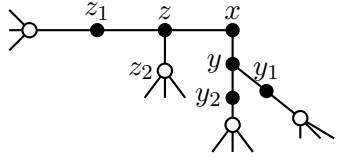
\begin{figure} \captionsetup{width=1.5\linewidth}
 \centering \begin{tikzpicture}[thick,scale=0.9]
        \path (-3,0) coordinate (1)  (-2,0) coordinate (y1)  (-1,0) coordinate (y)  (0,0) coordinate (x)    (-1,-0.6) coordinate (y2) (0,-1.4) coordinate (2) (0,-0.5) coordinate (z)   (0,-1) coordinate (z2) (0.5,-0.9) coordinate (z1)  (1,-1.3) coordinate (3)   (1,0) coordinate (v1)  (2,0) coordinate (u)    (2,-0.6) coordinate (v2) (2,-1.2) coordinate (4) (3,0) coordinate (v3) (4,0) coordinate (5);
        \path (y)edge (y2);\path (x) edge (2);  \path (x)edge (1);\path (z1) edge (z) edge (3);
        \path (3)edge (1.3,-1.6) edge (1.5,-1.6) edge (1,-1.6);
        \path (2)edge (0,-1.8) edge (0.3,-1.8) edge (-0.3,-1.8);
        \path (y2)edge  (-1,-1) edge (-1.3,-1) edge (-0.7,-1);
        \path (1)edge (-3.3,0) edge (-3.3,.3) edge (-3.3,-0.3);
        \fill  (x) node[above]{$x$} (z) node[left]{$y$}(y) node[above]{$z$}
        (z1) node[above]{$y_1$} (z2) node[left]{$y_2$}
        (y2) node[left]{$z_2$} (y1) node[above]{$z_1$}  ;
        \fill (1) circle (3pt)  (2) circle (3pt) (3) circle (3pt)
          (z) circle (3pt)(y) circle (3pt) (y1) circle (3pt)
         (z1) circle (3pt) (x) circle (3pt)(z2) circle (3pt);
           \filldraw[fill=white!80!gray!20!](1) circle (3pt)  (2) circle (3pt) (3) circle (3pt) (y2) circle (3pt);
    \end{tikzpicture}
\caption{The local structure of $H=G-\{v_1,v_2,u\}$ near the vertex $x$} \label{recolor-lemma}
    \end{figure}

\begin{proof}[Proof of Claim~\ref{recoloring}]
We uncolor the colors of 6 vertices $x$, $y$, $z$, $y_1$, $y_2$, $z_1$ from $\phi$.
Then we will show that we can recolor the vertices so that the new color of $x$ is not different from $\phi(x)$.
For $a\in S$, we denote by $L'(a)$ a subset of $L(a)$, which makes $\phi$ extended  to a $3$-dynamic coloring of $G$.
Then
\[|L'(x)|\geq 5, \ |L'(y)| \geq 4, \ |L'(z)| \geq 2, \ |L'(y_1)| \geq 3, \ |L'(y_2)| \geq 3,
\ |L'(z_1)| \geq 2. \]
Color $y$ by a color $c \notin L'(y_1)$.  Redefine $L'(v)$ by the set of available colors at $v$ after coloring $y$.  Then
\[|L'(x)|\geq 4,  \ |L'(z)| \geq 1, \ |L'(y_1)| \geq 3, \ |L'(y_2)| \geq 2,
\ |L'(z_1)| \geq 2.
\]
Color $z$ and $z_1$, then  redefine $L'(v)$ by the set of available colors at $v$ after coloring $z$ and $z_1$
\[|L'(x)|\geq 2,   \ |L'(y_1)| \geq 3, \ |L'(y_2)| \geq 2,\]
the number of available colors remained at $x$ is at least 2.  Thus we can recolor $x$ with a color distinct from $\psi(x)$.  This completes the proof of Claim \ref{recoloring}.
\end{proof}

For $a\in S$,  let $L'(a)$ be a subset of $L(a)$, which makes $\phi$ extended to a $3$-dynamic coloring of $G$.
Then
\[ |L'(v_1)| \ge 2, \ \ |L'(v_2)|\ge 2, \ \ |L'(u)|\ge 2.\]
Let $u_2$ and $u_3$ be the neighbors of $v_2$ and $v_3$ other than $u$, respectively.
Select and fix two colors $c_1$ and $c_2$ in $\{\phi(q) : q \in N_G(u_2)\setminus \{v_2\} \}$,
and then we may let
\begin{eqnarray*}
&& L'(v_1)=L(v_1)-\{ \phi(v_3),\phi(x),\phi(y),\phi(z)\};\\
&& L'(v_2)=L(v_2)-\{ \phi(v_3),\phi(u_2),c_1,c_2 \};\\
 &&L'(u)=L(u)-\{\phi(v_3),\phi(x),\phi(u_2),\phi(u_3)\}.
\end{eqnarray*}
By Claim~\ref{recoloring}, we can assume that a set of available colors at $v_2$ is not equal to that of $u$ by recoloring $x$.
As each has two available colors and all of them are not same,
we can color $v_1, v_2, u$ from the lists.
Thus  $G$ is 3-dynamically  $L$-colorable, a contradiction.
\end{proof}

\bigskip
We use discharging technique.
We define the charge of each vertex $v$ of $G$ by its degree $\deg(v)$. Note that the average charge is less than $\frac{18}{7}$. Next, we distribute their charges by the following rules, and then we show that  the new charge of each vertex is at least $\frac{18}{7}$, which leads a contradiction.

\bigskip
Recall that  $W_2$ is the set of $3$-vertices which have two 2-neighbors, and
$W_3$ is the set of $3$-vertices which have three 2-neighbors. See also Figure~\ref{fig:rules} for discharging rules.

\bigskip

\noindent {\bf Discharging Rules}

\begin{enumerate}
\item[{\bf R1.}] A $3^+$-vertex gives $\frac{2}{7}$ to  each of its 2-neighbors.

\item[{\bf R2.}] A $3^+$-vertex gives $\frac{1}{7}$ to each of its weak neighbors in $W_3$.

\item[{\bf R3.}] A $3^+$-vertex gives $\frac{1}{7}$  to each of its neighbors in $W_2$.

\item[{\bf R4.}] A $3$-vertex in $W_0$ gives $\frac{1}{7}$ to each of its neighbors $x$ in $W_1$ if $x$ has a neighbor in $W_2$ and a weak neighbor in $W_3$.

\end{enumerate}

\begin{figure}
\centering\begin{subfigure}{.4\textwidth}\centering
\begin{tikzpicture}[thick,scale=0.9]
        \path(-2,0) coordinate (4)    (0,0) coordinate (2)
        (2,0) coordinate (n);
        \path (4)edge (n);\path (4)  edge (-2.4,0)   edge(-2.4,-0.3);
        \path (n) edge (2.4,0) edge (2.4,0.3) edge(2.4,-0.3);
           \draw [->,bend right] (4) to node[below] {\small$\frac{2}{7}$}  (-0.1,-0.1);
        \fill  (4) node[above]{\tiny$3^+$-vertex $u$}   (2) node[above]{\tiny$2$-vertex};
        \fill  (2) circle (3pt) ;
          \filldraw[fill=white!80!gray!20!] (4) circle (3pt) (n) circle (3pt);
    \end{tikzpicture}
\caption{\textbf{R1} A $3^+$-vertex $u$ gives $\frac{2}{7}$ to  each of its 2-neighbors.}
    \end{subfigure} \hspace{2cm}
\begin{subfigure}{.4\textwidth}\centering
\begin{tikzpicture}[thick,scale=0.9]
        \path(-1.4,0) coordinate (4)    (-0.2,0) coordinate (2)
        (1,0) coordinate (n) (2,0) coordinate (a) (3,0) coordinate (b) (1,-0.7) coordinate (c) (1,-1.4) coordinate (d);
        \path (4)edge (b);\path (4)  edge (-1.8,0)   edge(-1.8,-0.3);
        \path (b) edge (3.4,0) edge (3.4,0.3) edge(3.4,-0.3);
    \path (d) edge (1,-1.7) edge (0.7,-1.7) edge(1.3,-1.7) edge (n);
       \draw [->,bend right] (4) to node[below] {\small$\frac{1}{7}$}  (0.9,-0.1);
       \fill  (4) node[above]{\tiny$3^+$-vertex $u$}   (n) node[above]{\tiny in $W_3$};
        \fill (2) circle (3pt) (n) circle (3pt) (a) circle (3pt)  (c) circle (3pt);
        \filldraw[fill=white!80!gray!20!] (4) circle (3pt) (b) circle (3pt)  (d) circle (3pt);
    \end{tikzpicture}
      \caption{\textbf{R2} A $3^+$-vertex $u$ gives $\frac{1}{7}$ to each of its weak neighbors in $W_3$.}
    \end{subfigure}
    \vspace{1cm}

    \begin{subfigure}{.4\textwidth}\centering
\begin{tikzpicture}[thick,scale=0.9]
        \path(-1.5,0) coordinate (4)    (0,0) coordinate (2)
        (0,0) coordinate (n) (1,0) coordinate (a) (2,0) coordinate (b) (0,-1.3) coordinate (d)
        ;
        \path (4)edge (b);\path (4)  edge (-1.9,0)   edge(-1.8,-0.3);
        \path (b) edge (2.4,0) edge (2.4,0.3) edge(2.4,-0.3);
    \path (d) edge (-0.3,-1.8) edge (0,-1.8) edge(0.3,-1.8) edge (n);
       \draw [->,bend right] (4) to node[below] {\small$\frac{1}{7}$}  (-.1,-0.1);
        \fill  (4) node[above]{\tiny$3^+$-vertex $u$}   (n) node[above]{\tiny in $W_2$};
        \fill   (2) circle (3pt) (n) circle (3pt) (a) circle (3pt)   (0,-0.7) circle (3pt);
             \filldraw[fill=white!80!gray!20!] (4) circle (3pt) (b) circle (3pt)   (d) circle (3pt);
    \end{tikzpicture}
     \caption{\textbf{R3}  A $3^+$-vertex $u$ gives $\frac{1}{7}$  to each of its neighbors in $W_2$.}
     \end{subfigure}%
\hspace{2cm}
    \begin{subfigure}{.4\textwidth}\centering
\begin{tikzpicture}[thick,scale=0.9]
        \path(-1.4,0) coordinate (4)    (0,0) coordinate (2)
        (1,0) coordinate (n)  (2,0) coordinate (b) (0,-0.7) coordinate (d);
        \path (4)edge (b);\path (4)  edge (-1.8,0)   edge(-1.7,-0.3);
        \path (b) edge (2.4,0)   edge(2.4,-0.3);
    \path (d) edge (0,-1.8) edge(0.8,-1.8) edge (2);
    \path (0,-1.5) edge (-0.2,-1.8) edge (0.2,-1.8);
    \path (0.6,-1.5) edge (0.6,-1.8) edge (1,-1.8);
       \draw [->,bend right] (4) to node[below] {\small$\frac{1}{7}$}  (-0.1,-0.1);
        \fill  (4) node[above]{\tiny $u\in W_0$}   (2) node[above]{\tiny $x\in W_1$}
         (d) node[right]{\tiny in $W_2$}  (1.9,0) node[above]{\tiny in $W_3$};
        \fill  (2) circle (3pt) (n) circle (3pt)  (b) circle (3pt) (d) circle (3pt)
        (0,-1.2) circle (3pt)    (0.35,-1.2) circle (3pt)
        ;
                \filldraw[fill=white!80!gray!20!] (4) circle (3pt) (0,-1.5) circle (3pt)   (0.6,-1.5) circle (3pt)  ;
    \end{tikzpicture}
    \caption{\textbf{R4} A $3$-vertex $u$ in $W_0$ gives $\frac{1}{7}$ to each of its neighbors $x$ in $W_1$ if $x$ has a neighbor in $W_2$ and a weak neighbor in $W_3$.}
     \end{subfigure}%
   \caption{An illustration of Discharging Rules}\label{fig:rules}
    \end{figure}
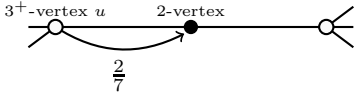
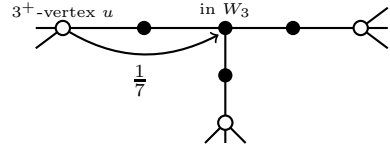
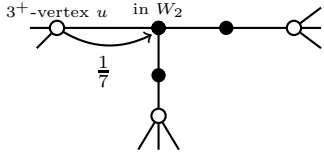
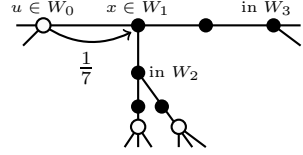

Let $d^*(u)$ be the new charge after discharging.  We will show  that $d^*(u) \geq  \frac{18}{7}$ for all $u\in V(G)$.
Note that by
[\textbf{C1}] each vertex of $G$ is a $2^+$-vertex.

\bigskip
\noindent (1)  Suppose that $\deg(u) =2$. By [\textbf{C2}]  the neighbors of $u$ are $3^+$-vertices and so it receives $\frac{2}{7}$ from each of its neighbors by {\bf R1}, and so  the new charge of $u$ is
\[ d^*(u) = 2 +\frac{2}{7}+\frac{2}{7} =\frac{18}{7}.\]

\bigskip

\noindent(2) Suppose that $\deg(u) =3$.
If $u\in W_0$, then $u$ does not have a 2-neighbor and a weak neighbor, and so $u$ might give charge $\frac{1}{7}$ to each of neighbors by {\bf R3} and {\bf R4}. Thus  the new charge of $u$  satisfies
\[ d^*(u) \ge  3 - 3 \times \frac{1}{7} =\frac{18}{7}.\]
Next, suppose that $u\in W_1\cup W_2\cup W_3$. Then $u$ gives $\frac{2}{7}$ to each of its 2-neighbors by {\bf R1}.

\medskip
\smallskip\noindent (2-1). Suppose that $u \in W_3$. 

By Lemma~\ref{three-nbr} ([\textbf{C4}]), $u$ has three distinct weak neighbors $x_1$, $x_2$, $x_3$ in $W_1$.
Then
$u$ receives $\frac{1}{7}$ from each $x_i$ by {\bf R2}.
Since each $x_i$ is not in $W_2\cup W_3$ and so $u$ does not give any charge to them by {\bf R2} or {\bf R3}.
Thus the new charge of $u$ satisfies
\[ d^*(u) \ge  3 - 3 \times \frac{2}{7} + 3 \times\frac{1}{7} = 3-\frac{3}{7}= \frac{18}{7}.\]

\smallskip\noindent (2-2). Suppose that $u\in W_2$.

Let $x$ be the $3^+$-neighbor of $u$, and $x_1$ and $x_2$ be the two weak neighbors of $u$. See Figure~\ref{fig:2-2} for an illustration.
\begin{figure}\centering
\begin{tikzpicture}[thick,scale=0.9]
        \path(-1.5,0) coordinate (4)    (0,0) coordinate (2)
        (0,0) coordinate (n) (1,0) coordinate (a) (2,0) coordinate (b) (0,-1.3) coordinate (d)
        ;
        \path (4)edge (b);\path (4)  edge (-1.9,0)   edge(-1.8,-0.3);
        \path (b) edge (2.4,0) edge (2.4,0.3) edge(2.4,-0.3);
    \path (d) edge (-0.3,-1.8) edge (0,-1.8) edge(0.3,-1.8) edge (n);
        \fill  (4) node[above]{\small $x$}   (n) node[above]{\small $u$} (b) node[above]{\small $x_1$} (d) node[right]{\small $x_2$};
        \fill (2) circle (3pt) (n) circle (3pt) (a) circle (3pt) (b) circle (3pt)   (d) circle (3pt)  (0,-0.7) circle (3pt);
              \filldraw[fill=white!80!gray!20!]  (4) circle (3pt) (b) circle (3pt)  (d) circle (3pt);
    \end{tikzpicture}
     \caption{An illustration of case (2-2), $u\in W_2$}\label{fig:2-2}
     \end{figure}
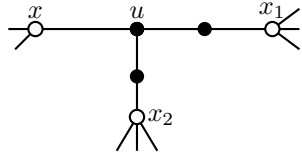%
Then $u$ receives $\frac{1}{7}$ from  $x$ by {\bf R3}.
By [\textbf{C4}], each $x_i$ is not in $W_3$.
Thus $u$ does not give any charge to them by {\bf R2}.
By [\textbf{C3}], $x\not\in W_2$ and so $u$ does not send any charge to $x$ by {\bf R3}.
Since $u\not\in W_0$, $u$ does not send any charge to $x$ by {\bf R4}.
Thus, in total,
\[ d^*(u) \ge  3 - 2 \times \frac{2}{7} + \frac{1}{7} = 3-\frac{3}{7} =\frac{18}{7}.\]

\smallskip\noindent (2-3). Suppose that $u\in W_1$.

Let $x$ and $z$ be the two $3^+$-neighbors and $w$ be the weak neighbor of $u$.
Then  by [\textbf{C5}], we may assume that $z \not\in W_2$.
Thus  $u$  gives at most $\frac{1}{7}$  to $x$ and $z$ in total by {\bf R3}.
By {\bf R2}, $u$ gives at most $\frac{1}{7}$ to $w$.

\smallskip\noindent (2-3-1). Suppose that $x \not\in W_2$ or $w\not\in W_3$.
If $x \not \in W_2$, then $u$ does not give any charge to $x$ by {\bf R3}.
If $w \not \in W_3$, then $u$ does not give any charge to $w$ by {\bf R2}.
Thus $u$ gives at most $\frac{1}{7}$ to $x$, $z$, and $w$ in total,
 \[ d^*(u) \ge  3 - \frac{2}{7} - \frac{1}{7}   = \frac{18}{7}.\]

    \begin{figure}\centering
\begin{tikzpicture}[thick,scale=0.9]
        \path(-1.5,0) coordinate (4)    (0,0) coordinate (2)
        (1,0) coordinate (n)  (2,0) coordinate (b) (0,-0.7) coordinate (d);
        \path (4)edge (4.3,0); \path (b) edge (2,-1.7);
        \path (4)  edge (-1.9,0)   edge(-1.8,-0.3);
        \path (4,0) edge(4.3,0.3) edge(4.3,-0.3);
    \path (d) edge (0,-1.8) edge(0.8,-1.8) edge (2);
    \path (0,-1.5) edge (-0.2,-1.8) edge (0.2,-1.8);
    \path (0.6,-1.5) edge (0.6,-1.8) edge (1,-1.8);
    \path (2,-1.4) edge (1.7,-1.7) edge (2.3,-1.7);
        \fill  (4) node[above]{\small $z$}   (2) node[above]{\small $u$} (b) node[above]{\small $w$}
         (d) node[right]{\small $x$};
        \fill (4) circle (3pt) (2) circle (3pt) (n) circle (3pt)  (b) circle (3pt) (d) circle (3pt)
        (0,-1.2) circle (3pt)    (0.35,-1.2) circle (3pt)
        (3,0) circle (3pt)  (2,-0.7) circle (3pt)  ;
    \filldraw[fill=white!80!gray!20!]  (0,-1.5) circle (3pt)  (0.6,-1.5) circle (3pt) (4,0) circle (3pt)  (2,-1.4) circle (3pt);
    \end{tikzpicture}
    \caption{An illustration for case (2-3-2), $u\in W_1$, $x\in W_2$,  and $w\in W_3$.}\label{fig:2-3}
     \end{figure}
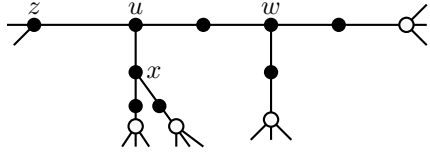%

\smallskip\noindent (2-3-2). Suppose that $x \in W_2$ and $w\in W_3$. See Figure~\ref{fig:2-3} for an illustration.
Then $u$ gives $\frac{1}{7}$  to $w$ by {\bf R2}.
By [\textbf{C6}], $z\not\in W_1$, which implies that $z\in W_0$.
Then  $u$ receives $\frac{1}{7}$ from $z$ and $u$ does not send any charge to $z$ by {\bf R4}.
(Note that $u$ is a vertex in $W_1$, which has a neighbor in $W_2$ and  one weak neighbor in $W_3$.)
Thus
\[ d^*(u) \ge  3 - \frac{2}{7} - 2 \cdot \frac{1}{7}  + \frac{1}{7}  = \frac{18}{7}.\]

\bigskip
\noindent (3) Suppose that $\deg(u) \ge 4$.

In this case, $u$ gives charge at most $\frac{2}{7}$ to its neighbors by {\bf R1}, {\bf R2} and {\bf R3}.
Note that any weak neighbor of $u$ is not in $W_3$ by [\textbf{C4}] and so $u$ does not give any charge to its weak neighbor by {\bf R2}.
Thus
\[ d^*(u) \ge  \deg(u) -\deg(u)\times \frac{2}{7}=\frac{5}{7}\deg(u) >\frac{18}{7}.\]

This completes the proof of Theorem \ref{thm:main}.


\section{Proof of Theorem~\ref{thm:main7}}

 We use the induction on the number of vertices and the number of edges.
In the following, we let $G$ be a minimal counterexample to Theorem \ref{thm:main7}. That is, $G$ is a graph with the smallest number of vertices and edges, $mad(G)<14/5$, and $ch_3^d(G)\ge 8$.
Then there exists a list assignment $L$ such that $|L(v)|\ge 7$  for each $v\in V(G)$ and $G$ is not $3$-dynamically $L$-colorable.

\begin{lemma} \label{4-2-nbr7} For $k\in\{3,4\}$, any $k$-vertex has at most $(k-2)$ $2$-neighbors.
\end{lemma}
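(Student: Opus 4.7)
My plan is to proceed by minimality, exactly as in Sections~2--3. Assume that $G$ is the fixed minimal counterexample to Theorem~\ref{thm:main7}, with list assignment $L$ satisfying $|L(\cdot)| \ge 7$, and suppose for contradiction that some $k$-vertex with $k \in \{3,4\}$ has at least $k-1$ two-neighbors. In each case I take a set $S$ containing the offending vertex together with all of its $2$-neighbors, obtain a $3$-dynamic $L$-coloring $\phi$ of $H := G - S$ by minimality, and extend $\phi$ greedily from residual lists $L'(a)$ built using the ``Rules of deciding $L'(a)$'' stated in the proof of Lemma~\ref{C5-subgraph}. Lemma~\ref{pendent:r} guarantees that the $3^+$-neighbors involved (in particular the far endpoints $u_i$ of the $2$-neighbors) are distinct vertices of $H$, so the configurations are as pictured.

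For a $3$-vertex $v$ with three $2$-neighbors $v_1,v_2,v_3$, take $S = \{v,v_1,v_2,v_3\}$. Rule~4 removes the three weak-neighbor colors $\phi(u_i)$ from $L'(v)$ (ensuring $\phi(v) \neq \phi(u_i)$, i.e., each $v_i$'s dynamic condition), and Rules~1 and~3 remove three colors from each $L'(v_i)$ via $u_i$, so $|L'(v)|,|L'(v_i)| \ge 4$. Color $v$ and then $v_1,v_2,v_3$ pairwise distinct from residual lists of sizes $\ge 3,2,1$, forcing $v$'s dynamic condition. If instead $v$ has exactly two $2$-neighbors $v_1,v_2$ and a $3^+$-neighbor $x$, take $S = \{v,v_1,v_2\}$; the rules give $|L'(v)| \ge 7-1-2-2 = 2$ and $|L'(v_i)| \ge 4$. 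The delicate constraint is that $v$'s dynamic condition forces $\phi(v_1),\phi(v_2),\phi(x)$ to be pairwise distinct. I handle it by first shrinking each $L'(v_i)$ to $L''(v_i) := L'(v_i) \setminus \{\phi(x)\}$ of size $\ge 3$, then coloring $v$ from $L'(v)$, then $v_1$ from $L''(v_1)\setminus\{\phi(v)\}$ (size $\ge 2$), and finally $v_2$ from $L''(v_2)\setminus\{\phi(v),\phi(v_1)\}$ (size $\ge 1$).

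For a $4$-vertex $u$ with four $2$-neighbors, set $S = \{u,v_1,\ldots,v_4\}$. Rule~4 excludes the four weak-neighbor colors $\phi(u_i)$ from $L'(u)$, giving $|L'(u)| \ge 3$, while $|L'(v_i)| \ge 4$ as before. Color $u$, then $\phi(v_1),\phi(v_2),\phi(v_3)$ pairwise distinct (residual sizes $\ge 3,2,1$), and finally $v_4$ arbitrarily; $u$'s dynamic condition is satisfied by the three distinct colors on $v_1,v_2,v_3$. For a $4$-vertex $u$ with exactly three $2$-neighbors $v_1,v_2,v_3$ and a $3^+$-neighbor $y$, take $S = \{u,v_1,v_2,v_3\}$. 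Then $|L'(u)| \ge 7-1-2-3 = 1$ (Rule~1 for $y$, Rule~3 via $y$'s two $H$-neighbor colors, Rule~4 for the three weak neighbors $u_i$) and $|L'(v_i)| \ge 4$. Color $u$ from $L'(u)$, then $\phi(v_1) \in L'(v_1)\setminus\{\phi(u),\phi(y)\}$ (size $\ge 2$), $\phi(v_2) \in L'(v_2)\setminus\{\phi(u),\phi(y),\phi(v_1)\}$ (size $\ge 1$), and $v_3$ from $L'(v_3)\setminus\{\phi(u)\}$; the triple $\{\phi(v_1),\phi(v_2),\phi(y)\}$ provides the three distinct colors required in $N_G(u)$.

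The main obstacle is the tightness in the last two configurations ($v \in W_2$ and the $4$-vertex with exactly three $2$-neighbors), where the residual list at the central vertex has only $2$ (resp.\ $1$) colors. In both cases the dynamic condition on the center is the binding constraint, and it is resolved by pre-excluding $\phi(x)$ (resp.\ $\phi(y)$) from the $2$-neighbors' lists before coloring in the order ``center first, then $2$-neighbors.'' All remaining dynamic conditions---at the $u_i$'s, at $x$ or $y$, and at each $v_i$---are handled automatically by Rules~1, 3, and~4.
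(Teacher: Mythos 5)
Your argument is correct, but it is mechanically quite different from the paper's. The paper deletes the single edge $vv_k$ (where $v_k$ is an arbitrary $k$-th neighbor), invokes minimality on $H=G-vv_k$, then uncolors only $v$ and its $2$-neighbors and recolors them greedily; the whole lemma is dispatched in one uniform count ($(k-1)+3\le 6<7$ forbidden colors at $v$, then two available colors at each $2$-neighbor). You instead delete a vertex set $S$ and rebuild the coloring on $S$ via the $L'$-rules of Lemma~\ref{C5-subgraph}, which forces you into four sub-cases ($k=3$ or $4$, with $k$ or $k-1$ two-neighbors) but makes every dynamic constraint explicit; your device of pre-excluding $\phi(x)$ (resp.\ $\phi(y)$) from the $2$-neighbors' lists before coloring the center is a clean way to force the center's own dynamic condition, which is exactly the binding constraint. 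The trade-off is transparency versus brevity: the paper's edge-deletion keeps all of $V(G)$ colored and so never has to re-derive the configuration's local structure, while your version documents precisely which colors are forbidden and why.

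One citation is imprecise. You assert that Lemma~\ref{pendent:r} makes the far endpoints $u_1,\dots,u_k$ pairwise distinct. Lemma~\ref{pendent:r}(4) is stated only for vertices of $W_i(G)$, i.e.\ for \emph{3-vertices}, so it covers your $k=3$ cases but says nothing about a $4$-vertex having two $2$-neighbors with a common far endpoint $w$. Fortunately this does not break your proof: if $\deg(w)=3$ then $w$ itself lies in $W_2\cup W_3$ with both weak neighbors equal to $u$, contradicting Lemma~\ref{pendent:r}(4) applied to $w$; and if $\deg(w)\ge 4$ then coincidences among the $u_i$ only shrink the set of colors removed from $L'(u)$ (helping the count), while $w$'s dynamic condition is still enforced by rule~(3) acting on each $L'(v_i)$ since $d_H(w)\ge 2$. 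You should either make this observation explicit or drop the distinctness claim for $k=4$, since as written that step is unjustified.
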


\begin{proof} Let $k\in\{3,4\}$ and let
$v$ be a $k$-vertex, and $v_1$, $v_2$, \ldots, $v_k$ be its neighbors.
Suppose that $v$ has at least $(k-1)$ 2-neighbors $v_1$, \ldots, $v_{k-1}$.
Let $H=G-vv_k$.
Then $mad(H)<14/5$.
Since $G$ is a minimal counterexample and $H$ is smaller than $G$, $H$ is
3-dynamically $L$-colorable.
Thus there is a 3-dynamic  coloring $\phi$ of $H$ such that $\phi(a)\in L(a)$ for any $a\in V(H)$.
Then uncolor the vertex $v$ and its 2-neighbors $v_1$, \ldots, $v_{k-1}$.

Note that the number of forbidden colors at $v$ is at most $(k-1)+3=k+2\le 6$.
Thus $v$ has at least one available color.
We color $v$ first with an available color.
Then we recolor each 2-neighbor of $v$ one by one.
Since the number of available colors at each 2-neighbor of $v$ is two, and so they are colorable so that $v$ has three distinct colored neighbors. Thus $G$ is 3-dynamically $L$-colorable, a contradiction.
\end{proof}

\begin{lemma} \label{three-W1-nbr7} No two $3$-vertices $x$ and $y$ in $W_1$ are adjacent.
\end{lemma}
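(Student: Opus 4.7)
The plan is to assume for contradiction that two $3$-vertices $x, y \in W_1$ are adjacent, and to derive a $3$-dynamic $L$-coloring of $G$. I would write $w_x$ for the unique $2$-neighbor of $x$, $u_x$ for its other endpoint, and $x'$ for the third neighbor of $x$ (so $N_G(x) = \{y, w_x, x'\}$); define $w_y, u_y, y'$ analogously. By Lemma~\ref{pendent:r}, the vertices $u_x, u_y, x', y'$ are all $3^+$-vertices, and one has $u_x \ne x'$ and $u_y \ne y'$ (else $\{x, x', w_x\}$ would form a triangle in which the adjacent pair $x, x'$ shares the $2$-neighbor $w_x$, contradicting Lemma~\ref{pendent:r}(3)). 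The most delicate remaining identification is $u_x = u_y$, which creates a $5$-cycle $x\,w_x\,u_x\,w_y\,y$ and will need special treatment at the end.

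The next step is to apply minimality to $H = G - S$ with $S = \{x, y, w_x, w_y\}$. Since $H$ is strictly smaller than $G$ and $mad(H) \le mad(G) < \frac{14}{5}$, there is a $3$-dynamic $L$-coloring $\phi$ of $H$. For each $v \in S$, I would define $L'(v) \subseteq L(v)$ by removing from $L(v)$ every color of $\phi$ that would conflict with the extension either through (a) a proper adjacency with an $H$-neighbor of $v$, or (b) a dynamic constraint coupling $\phi(v)$ to a color already fixed by $\phi$ (for instance $\phi(x) \ne \phi(u_x)$ from the dynamic condition on $w_x$, $\phi(x) \ne \phi(y')$ from the dynamic condition on $y$, and, when $\deg_G(x') = 3$, the two colors on $N_H(x')$ from the dynamic condition on $x'$). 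A worst-case count yields
\[
|L'(x)|,\; |L'(y)| \;\ge\; 2 \qquad \text{and} \qquad |L'(w_x)|,\; |L'(w_y)| \;\ge\; 3.
\]

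The last step is to observe that $G^2[S] \cong K_4 - w_xw_y$ in the generic case, with $x, y$ the two degree-$3$ vertices of this graph. I would color $x$ and $y$ first with distinct elements of $L'(x)$ and $L'(y)$ (always possible since each list has size $\ge 2$), and then color $w_x, w_y$ from $L'(w_x) \setminus \{\phi(x), \phi(y)\}$ and $L'(w_y) \setminus \{\phi(x), \phi(y)\}$, each of size $\ge 1$ and unconstrained from each other. This extends $\phi$ to a $3$-dynamic $L$-coloring of $G$, contradicting the choice of $G$. The hard part will be the tight bookkeeping at $x$ and $y$: the bound $|L'(x)| \ge 2$ is attained when $\deg_G(x') = 3$ and all five forbidden colors are distinct, so every dynamic constraint coupling $\phi(x)$ to $\phi$ on $H$ must be tracked. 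In the side case $u_x = u_y$, the graph $G^2[S]$ becomes $K_4$, but simultaneously $u_x$ loses both $w_x$ and $w_y$ in passing to $H$ and so contributes at most one color to each of $L'(w_x)$ and $L'(w_y)$, improving those bounds to $\ge 4$; this is enough to greedily color $K_4$ in the order $x, y, w_x, w_y$, so the argument still goes through.
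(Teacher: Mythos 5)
Your argument is correct and follows essentially the same route as the paper: delete the configuration, extend a $3$-dynamic $L$-coloring of the smaller graph, count surviving list sizes, and treat the coincidence $u_x=u_y$ (the paper's $w_1=w_2$) as a separate case. The only quibble is that in that side case your claim that $u_x$ contributes at most one forbidden color to each of $L'(w_x)$ and $L'(w_y)$ glosses over the dynamic condition at $u_x$ when $\deg_G(u_x)=4$ (where two colors must still be reserved so that $u_x$ sees a third color); the greedy coloring nevertheless goes through if you reserve those two colors on only one of the two lists, or reserve them on both and observe that $w_x$ and $w_y$ then no longer need distinct colors.
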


\begin{proof}
Let $x$ and $y$ be $3$-vertices such that $x,y\in W_1$ and $xy\in E(G)$.
Let $x'$ and $y'$ be  2-neighbors of $x$ and $y$, respectively.
And let $w_1$ and $w_2$ be the other neighbor of $x'$ and $y'$, respectively.
See Figure~\ref{fig:4-2}.
Let $H=G-\{x',y'\}$.
Then $mad(H)<14/5$.
Since $G$ is a minimal counterexample and $H$ is smaller than $G$, $H$ is
3-dynamically  $L$-colorable.
Thus there is a 3-dynamic  coloring $\phi$ of $H$ such that $\phi(a)\in L(a)$ for any $a\in V(H)$.
Then uncolor the colors of $x$ and $y$.
Then the number of available colors at  $x$ is at least 3, and that of $y$ is also at least 3.
Color $x$ with a color which is different from the color assigned at $w_1$, and $y$ with a color which is different from the color assigned at $w_2$.  Let $L'(x')$ and $L'(y')$ be the set of available colors at $x'$ and $y'$, respectively.

\begin{figure}
\centering\begin{subfigure}{.5\textwidth}\centering
\begin{tikzpicture}[thick,scale=0.9]
        \path(-2,0) coordinate (w1)   (0.5,1.6) coordinate (kk)   (-1,0) coordinate (x')
        (0,0) coordinate (x)  (1,0) coordinate (y) (2,0) coordinate (y') (3,0) coordinate (w2) (0,-1) coordinate (xn) (1,-1) coordinate (yn) ;
        \path (-2.3,0) edge (3.3,0); \path (x) edge (0,-1.3);
        \path (y)  edge (1,-1.3);
        \path (w1) edge(-2.3,0.3) edge(-2.3,-0.3);
    \path (w2) edge(3.3,0.3) edge(3.3,-0.3);
    \path (xn) edge (-0.3,-1.3) edge (0.3,-1.3);
    \path (yn) edge (0.7,-1.3) edge (1.3,-1.3);
        \fill  (w1) node[above]{\small $w_1$}
        (w2) node[above]{\small $w_2$}
        (x) node[above]{\small $x$}
        (y) node[above]{\small $y$}
        (x') node[above]{\small $x'$}
        (y') node[above]{\small $y'$};
        \fill (w1) circle (3pt) (w2) circle (3pt) (x) circle (3pt)  (y) circle (3pt) (x') circle (3pt)
        (y') circle (3pt)  ;
    \filldraw[fill=white!80!gray!20!]  (w1) circle (3pt)  (w2) circle (3pt) (xn) circle (3pt)  (yn) circle (3pt);
    \end{tikzpicture}\caption{Case when $w_1\neq w_2$}\end{subfigure}
    \begin{subfigure}{.4\textwidth}\centering
\begin{tikzpicture}[thick,scale=0.9]
        \path(0.5,1.6) coordinate (w1)    (-0.3,1) coordinate (x')
        (0,0) coordinate (x)  (1,0) coordinate (y) (1.3,1) coordinate (y')  (0,-0.7) coordinate (xn) (1,-0.7) coordinate (yn);
        \path (x) edge (y); \path (x) edge (0,-1);
        \path (y)  edge (1,-1); \path (x') edge (x) edge(w1);\path (y') edge (y) edge(w1);
        \path (w1) edge(0.5,1.9) edge(0.2,1.8) edge(0.7,1.8);
    \path (xn) edge (-0.3,-1) edge (0.3,-1);
    \path (yn) edge (0.7,-1) edge (1.3,-1);
        \fill  (w1) node[right]{\small $w_1$}
        (x) node[left]{\small $x$}
        (y) node[right]{\small $y$}
        (x') node[left]{\small $x'$}
        (y') node[right]{\small $y'$};
        \fill  (x) circle (3pt)  (y) circle (3pt) (x') circle (3pt)
        (y') circle (3pt)   ;
        \filldraw[fill=white!80!gray!20!]  (w1) circle (3pt) (xn) circle (3pt)  (yn) circle (3pt) ;
    \end{tikzpicture}\caption{Case when $w_1=w_2$}\end{subfigure}
    \caption{An illustration for Lemma~\ref{three-W1-nbr7}.}\label{fig:4-2}
     \end{figure}
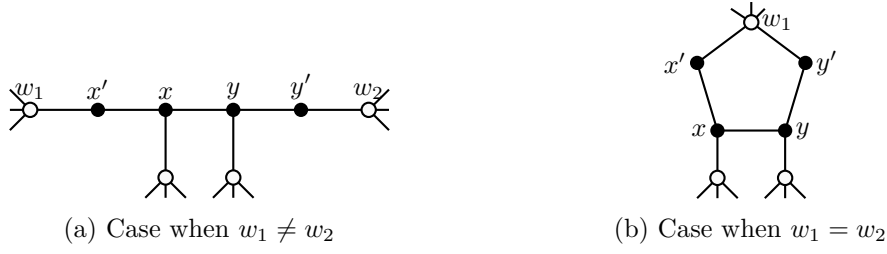%

\medskip
Now, we consider two cases.

\noindent
Case 1: $w_1  \neq w_2$ (See Figure~\ref{fig:4-2}-(a)).

Since $|L'(x')| \geq 1 $ and $|L'(y')| \geq 1 $, we can color $x'$ and $y'$ to have a dynamic 3-coloring.

\medskip
\noindent
Case 2: $w_1  = w_2$ (See Figure~\ref{fig:4-2}-(b)).

If the degree of $w_1$ in $H$ is at least three, then $x'$ and $y'$ do not have to use different color and so we have a 3-dynamic coloring.  Next, if the degree of $w_1$ is 2 in $H$, then $|L'(x')| \geq 2 $ and $|L'(y')| \geq 2 $.  So they are colorable. Thus $G$ is 3-dynamically $L$-colorable, a contradiction.
\end{proof}

\begin{lemma} \label{three-W1-2-nbr7} No $3$-vertex has three neighbors in $W_1$.
\end{lemma}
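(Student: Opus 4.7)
Plan.  Suppose for contradiction that a $3$-vertex $v$ has three $W_1$-neighbours $x_1,x_2,x_3$.  For each $i\in[3]$, write $w_i$ for the $2$-neighbour of $x_i$, $y_i$ for the other $3^+$-neighbour of $x_i$, and $z_i$ for the neighbour of $w_i$ distinct from $x_i$.  Lemma~\ref{three-W1-nbr7} rules out $y_i=x_j$ (which would connect two $W_1$-vertices), and Lemma~\ref{pendent:r} supplies the remaining distinctness together with the fact that each $y_i,z_i$ is a $3^+$-vertex.  Consequently the seven vertices of $S:=\{v,x_1,x_2,x_3,w_1,w_2,w_3\}$ are pairwise distinct.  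My plan is to apply the minimality of $G$ to $H:=G-S$ to obtain a $3$-dynamic $L$-colouring $\phi$ of $H$, and then extend $\phi$ to a $3$-dynamic $L$-colouring of $G$, contradicting the choice of $G$.

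For each $a\in S$, let $L'(a)\subseteq L(a)$ consist of the colours compatible with $\phi$ (i.e.\ those preserving properness and every $3$-dynamic condition inherited from $H$).  A routine accounting gives $|L'(v)|\ge 7-3=4$ (remove $\phi(y_1),\phi(y_2),\phi(y_3)$, which are forced so that each $x_i$ sees three distinct colours), $|L'(x_i)|\ge 7-4=3$ (remove $\phi(y_i),\phi(z_i)$ and up to two colours for the dynamic condition at $y_i$), and $|L'(w_i)|\ge 7-4=3$ (remove $\phi(z_i),\phi(y_i)$ and up to two colours for the dynamic condition at $z_i$).  In the generic case $G^2[S]$ consists of $v$ joined to the six other vertices, the triangle on $\{x_1,x_2,x_3\}$, and the three pendants $x_iw_i$.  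Colouring greedily in the order $x_1,x_2,x_3,v,w_1,w_2,w_3$ always leaves at least one admissible colour at each step, which yields the desired extension.

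The main obstacle is handling vertex coincidences that thicken $G^2[S]$; the worst case is $z_1=z_2=z_3=z$, which creates a triangle on $\{w_1,w_2,w_3\}$.  Here Lemma~\ref{4-2-nbr7} forces $\deg_G(z)\ge 5$ (as $z$ would then carry three $2$-neighbours), so the $3$-dynamic condition at $z$ is either already met by $\phi$ in $H$ or becomes automatic once the $w_i$'s receive the three distinct colours forced by their new triangle: the two pre-existing distinct colours on $z$'s $H$-neighbours together with three distinct $w$-colours trivially give at least three distinct colours in $N_G(z)$.  Hence no colour need be removed from any $L'(w_i)$ for this condition, so $|L'(w_i)|\ge 5$, which is more than enough to colour the $w$-triangle after $x_1,x_2,x_3,v$ are fixed.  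The milder coincidences ($z_1=z_2\neq z_3$, $y_i=y_j$, or $y_i=z_j$) are handled by the same principle: every coincidence identifies two would-be forbidden colours, enlarging some residual list by precisely the amount needed to absorb the new $G^2[S]$-edge it introduces.
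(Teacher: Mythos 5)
Your proof is correct and follows essentially the same route as the paper: delete the same seven vertices $\{v,x_1,x_2,x_3,w_1,w_2,w_3\}$, extend a $3$-dynamic $L$-coloring of the remainder by counting residual lists, and invoke Lemma~\ref{4-2-nbr7} to handle the case where the $2$-vertices $w_i$ share a second neighbour (forcing that neighbour to have degree at least $4$ or $5$, so its dynamic condition is essentially free). The only cosmetic difference is that you phrase the extension as a greedy coloring of $G^2[S]$ with coincidence cases on the $z_i$, whereas the paper colors $x_1,x_2,x_3,v$ with distinct colors first and then treats the shared-neighbour cases for the $w_i$; the content is identical.
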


\begin{proof}
  Suppose that there is a vertex $x$ having three neighbors $x_1,x_2,x_3$ in $W_1$.
  Let $x'_i$ be the 2-neighbor of $x_i$ for each $i\in [3]$. See Figure~\ref{fig:4-3} for an illustration.
  Let $H=G-\{ x,x_1,x_2,x_3,x'_1,x'_2,x'_3\}$.
  Since $G$ is a minimal counterexample and $H$ is smaller than $G$,
  $H$ is 3-dynamically $L$-colorable.
Thus there is a 3-dynamic  coloring $\phi$ of $H$ such that $\phi(a)\in L(a)$ for any $a\in V(H)$.
Then the number of available colors at $x$ is at least $4$,
that of $x_i$ is at least $3$ for each $i\in [3]$.
We give a color to $x_1$, $x_2$, $x_3$, $x$ with their available colors so that they get distinct colors.
Then in the resulting coloring, the number of available colors at $x'_i$ is  at least $1$.
We color $x'_1$, $x'_2$, $x'_3$ by that available colors.
Here, the only thing that we have to concern is the case where $x'_i$ and $x'_j$ share a common neighbor and they get the same color.
\begin{figure}
\centering
\begin{subfigure}{.3\textwidth}\centering
\begin{tikzpicture}[thick,scale=0.8]
        \path(-2,0) coordinate (1)    (-1,0) coordinate (x3)   (0.3,1.8) coordinate (kk)
        (0,0) coordinate (x)  (1,0) coordinate (x1) (0,-1) coordinate (x2)
        (2,0) coordinate (5) (-1.7,0.6) coordinate (x3')
        (-2.4,1.2) coordinate (2) (1.7,0.6) coordinate (x1') (2.4,1.2) coordinate (6)
        (-0.7,-1.5) coordinate (x2') (-1.4,-2) coordinate (3) (0.7,-1.5) coordinate (4);
       \path (-2.3,0) edge (2.3,0); \path (x3) edge (2);\path (x1) edge (6);
      \path (x2)  edge (3) edge (4) edge (x);
     \path (1) edge(-2.3,0.3) edge(-2.3,-0.3);
    \path (5) edge(2.3,0.3) edge(2.3,-0.3);
   \path (2) edge (-2.6,1.4) edge (-2.6,1);
   \path (6) edge (2.6,1.4) edge (2.6,1);
   \path (3) edge (-1.7,-2.2) edge (-1.1,-2.2);
   \path (4) edge (1,-1.7) edge (0.4,-1.7);
        \fill  (x2) node[right]{\small $x_3$}
        (x3) node[above]{\small $x_2$}
        (x) node[above]{\small $x$}
        (x1) node[above]{\small $x_1$}
        (x1') node[above]{\small $x'_1$}
        (x2') node[above]{\small $x'_3$}
                (x3') node[above]{\small $x'_2$};
        \fill (x) circle (3pt) (x1) circle (3pt) (x1') circle (3pt)
        (x2) circle (3pt) (x2') circle (3pt)
        (x3) circle (3pt) (x3') circle (3pt) ;
            \filldraw[fill=white!80!gray!20!] (1) circle (3pt) (2) circle (3pt) (3) circle (3pt) (4) circle (3pt) (5) circle (3pt) (6) circle (3pt);
    \end{tikzpicture}\caption{Case when $x'_1$, $x'_2$ and $x'_3$ do not share a neighbor}\end{subfigure} \quad
\begin{subfigure}{.3\textwidth}\centering
\begin{tikzpicture}[thick,scale=0.8]
        \path(-2,0) coordinate (1)    (-1,0) coordinate (x3)
        (0,0) coordinate (x)  (1,0) coordinate (x1) (0,0.7) coordinate (x2)
        (2,0) coordinate (5) (-1.7,0.6) coordinate (x3')
        (0,2.1) coordinate (2) (1.7,0.6) coordinate (x1')
        (0,1.4) coordinate (x2')   (0.5,1.2) coordinate (4) (-1.4,-1.8) coordinate (k);
       \path (-2.3,0) edge (2.3,0); \path (x3') edge (2) edge (x3);\path (x1') edge (2) edge (x1);
      \path (x2')  edge (2)  edge (x) edge (x2); \path (x2) edge (4);
     \path (1) edge(-2.3,0.3) edge(-2.3,-0.3);
    \path (5) edge(2.3,0.3) edge(2.3,-0.3);
   \path (2) edge (-0.3,2.3) edge (0.3,2.3);
   \path (4) edge (0.7,1.3) edge (0.7,1);
        \fill  (x2) node[right]{\small $x_3$}
        (x3) node[above]{\small $x_2$}
        (x) node[below]{\small $x$}
        (x1) node[above]{\small $x_1$}
        (x1') node[above]{\small $x'_1$}
        (x2') node[left]{\small $x'_3$}  (2) node[right]{\small $w$}
                (x3') node[above]{\small $x'_2$};
        \fill (x) circle (3pt) (x1) circle (3pt) (x1') circle (3pt)
        (x2) circle (3pt) (x2') circle (3pt)
        (x3) circle (3pt) (x3') circle (3pt) ;
       \filldraw[fill=white!80!gray!20!] (1) circle (3pt) (2) circle (3pt)   (4) circle (3pt) (5) circle (3pt);
        \end{tikzpicture}
        \caption{Case when $x'_1$, $x'_2$ and $x'_3$ share a neighbor $w$}\end{subfigure}\quad
    \begin{subfigure}{.3\textwidth}\centering
\begin{tikzpicture}[thick,scale=0.8]
             \path(-2,0) coordinate (1)    (-1,0) coordinate (x3)
        (0,0) coordinate (x)  (1,0) coordinate (x1) (0,-1) coordinate (x2)
        (2,0) coordinate (5) (-1.7,0.6) coordinate (x3')
        (0,1.5) coordinate (2) (1.7,0.6) coordinate (x1')
        (-0.7,-1.5) coordinate (x2') (-1.4,-2) coordinate (3) (0.7,-1.5) coordinate (4);
       \path (-2.3,0) edge (2.3,0); \path (x3') edge (2) edge (x3);\path (x1') edge (2) edge (x1);
      \path (x2')  edge (3)  edge (x2); \path (x2) edge (x) edge (4);
     \path (1) edge(-2.3,0.3) edge(-2.3,-0.3);
    \path (5) edge(2.3,0.3) edge(2.3,-0.3);
   \path (2) edge (-0.3,1.8) edge (0.3,1.8);
   \path (3) edge (-1.7,-2.2) edge (-1.1,-2.2);
   \path (4) edge (1,-1.7) edge (0.4,-1.7);
        \fill  (x2) node[right]{\small $x_3$}
        (x3) node[above]{\small $x_2$}
        (x) node[above]{\small $x$}
        (x1) node[above]{\small $x_1$}
        (x1') node[above]{\small $x'_1$}
        (x2') node[above]{\small $x'_3$}  (2) node[right]{\small $w$}
                (x3') node[above]{\small $x'_2$};
        \fill (x) circle (3pt) (x1) circle (3pt) (x1') circle (3pt)
        (x2) circle (3pt) (x2') circle (3pt)
        (x3) circle (3pt) (x3') circle (3pt) ;
        \filldraw[fill=white!80!gray!20!](1) circle (3pt) (2) circle (3pt) (3) circle (3pt)   (4) circle (3pt) (5) circle (3pt);
        \end{tikzpicture}
 \caption{Case when $x'_1$ and $x'_2$ share a neighbor $w$ but $x'_3$ does not}\end{subfigure}
    \caption{An illustration for Lemma~\ref{three-W1-2-nbr7}.}\label{fig:4-3}
     \end{figure}
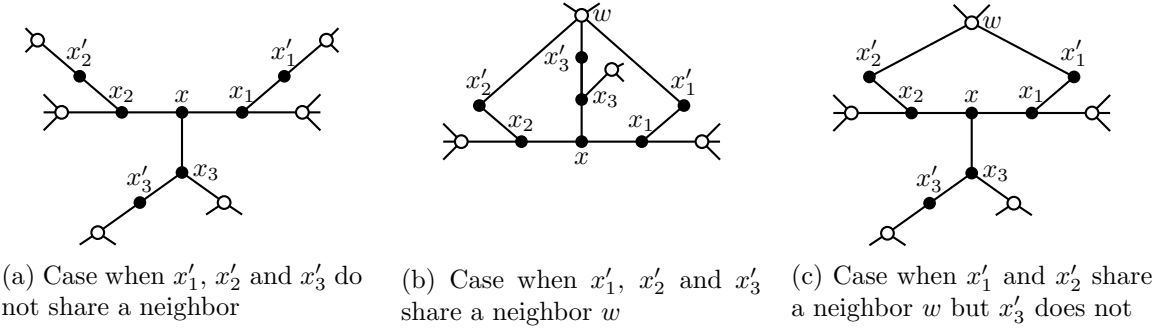%

Suppose that $x'_1$, $x'_2$, $x'_3$ share a neighbor $w$. See Figure~\ref{fig:4-3}-(b). Then
$w$ has at least three 2-neighbors and so by Lemma~\ref{4-2-nbr7}, $w$ is a  $5^+$-vertex.
Thus in the 3-dynamic coloring $\phi$ of $H$, $w$ has already at least two distinct colors in its neighbors other than the colors of $x'_1$, $x'_2$, $x'_3$. Thus eventually, the extended coloring of $G$ results that $G$ is 3-dynamically $L$-colorable, a contradiction.

Suppose that  $x'_1$ and $x'_2$ share a neighbor $w$ and $x'_3$ does not. See Figure~\ref{fig:4-3}-(c).
Then $w$ has at least two 2-neighbors  by Lemma~\ref{4-2-nbr7}, $w$ is a  $4^+$-vertex.
Thus in the 3-dynamic coloring $\phi$ of $H$, $w$ has already at least two distinct colors in its neighbors other than the colors of $x'_1$ and $x'_2$.
Thus the extended coloring of $G$ results that $G$ is 3-dynamically $L$-colorable, a contradiction.
\end{proof}

\bigskip
We use discharging technique.
We define the charge of each vertex $v$ of $G$ by its degree $\deg(v)$. Note that the average charge is less than $\frac{14}{5}$. Next, we distribute their charges by the following rules, and then we show that  the new charge of each vertex is at least $\frac{14}{5}$, which leads a contradiction.
The rules are as follows.

\bigskip

\begin{enumerate}
\item[{\bf R1.}] A  $3^+$-vertex gives $\frac{2}{5}$ to its each of 2-neighbors.
\item[{\bf R2.}] A  $3^+$-vertex  gives $\frac{1}{10}$ to its each of 3-neighbors in $W_1$.
\end{enumerate}

Let $d^*(u)$ be the new charge after discharging.  We will show  that $d^*(u) \geq  \frac{14}{5}$ for all $u\in V(G)$.
Note that by Lemma~\ref{pendent:r}-(1), each vertex of $G$ is a $2^+$-vertex.
If $\deg(u) =2$, by Lemma~\ref{pendent:r}-(2), the neighbors of $u$ are $3^+$-vertices and so it receives $\frac{2}{5}$ from each of its neighbors by {\bf R1}, which implies that
\[ d^*(u) = 2 +\frac{2}{5}+\frac{2}{5} =\frac{14}{5}.\]

If  $\deg(u) =3$, then either $u\in W_0$ or $u\in W_1$ by Lemma~\ref{4-2-nbr7}.
If  $u\in W_0$, $u$ has at most two neighbors $W_1$ by Lemma~\ref{three-W1-2-nbr7} and so $u$ gives $\frac{1}{10}$ to each of its $3$-neighbors in $W_1$ by {\bf R2} and so
\[ d^*(u) \ge 3 -2 \times \frac{1}{10} =\frac{14}{5}.\]
If   $u\in W_1$, then by Lemma~\ref{three-W1-nbr7}, the  $u$ has two $3^+$-neighbors and so it receives $\frac{1}{10}$ from each of them by {\bf R2} and so
\[ d^*(u) \ge 3 -\frac{2}{5}+\frac{1}{10}+\frac{1}{10} =\frac{14}{5}.\]
If $\deg(u) = 4$, then  by Lemma~\ref{4-2-nbr7}, $u$ has at most two 2-neighbors, and so
\[ d^*(u) \ge 4 -2\times \frac{2}{5} -2\times \frac{1}{10}=3>\frac{14}{5}.\]
If $\deg(u) \ge 5$, then
\[ d^*(u) \ge \frac{3}{5} \deg(u)>\frac{14}{5}.\]

\section{Proof of Theorem~\ref{thm:main8}}
 We use the induction on the number of vertices and the number of edges.
In the following, we let $G$ be a minimal counterexample to Theorem \ref{thm:main8}. That is, $G$ is a graph with the smallest number of vertices and edges, $mad(G)<3$, and $ch_3^d(G)\ge 9$.
Then there exists a list assignment $L$ such that $|L(v)|\ge 8$ for each $v\in V(G)$ and $G$ is not $3$-dynamically $L$-colorable.

\begin{lemma} \label{3-2-nbr8}   Any $3^{-}$-vertex has no $2$-neighbors.
\end{lemma}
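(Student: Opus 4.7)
By Lemma \ref{pendent:r} applied with $k=8$, no two $2^{-}$-vertices of $G$ are adjacent, so it suffices to rule out a $3$-vertex $x$ having a $2$-neighbor $y$. Writing $N_G(x)=\{y,x_1,x_2\}$ and $N_G(y)=\{x,y'\}$, Lemma \ref{pendent:r}(2) shows that $y'$ is a $3^{+}$-vertex, while Lemma \ref{pendent:r}(3) rules out $y'\in\{x_1,x_2\}$, because otherwise the adjacent pair $x,y'$ would share the common $2$-neighbor $y$. Hence $x,y,y',x_1,x_2$ are five distinct vertices, with $y'\not\sim x$.

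Set $H=G-y$ and, by minimality of $G$, fix a $3$-dynamic $L$-coloring $\phi$ of $H$. In $H$ the vertex $x$ has degree two, so the dynamic condition at $x$ already forces $\phi(x_1)\neq\phi(x_2)$. The plan is to choose $\phi(y)\in L(y)$ avoiding: $\phi(x),\phi(y')$ (properness); $\phi(x_1),\phi(x_2)$ (dynamic at $x$ in $G$); and, only when $\deg_G(y')=3$, the colors of the two other neighbors of $y'$ (dynamic at $y'$ in $G$, noting that for $\deg_G(y')\ge 4$ the three distinct colors on $N_H(y')$ already persist into $N_G(y')$). This forbids at most $2+2+2=6$ colors, leaving at least $8-6=2$ admissible choices in $L(y)$.

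The one additional requirement is that, for the dynamic condition at $y$ in $G$, we need $\phi(x)\neq\phi(y')$, a constraint on the \emph{existing} coloring $\phi$ rather than on $\phi(y)$. If $\phi(x)\neq\phi(y')$ already, the count above completes the extension and contradicts the choice of $G$. The main obstacle is the bad case $\phi(x)=\phi(y')$; here I would uncolor $x$ and try to recolor it with a new color $c\in L(x)$ satisfying $c\notin\{\phi(x_1),\phi(x_2),\phi(y')\}$ plus all dynamic conditions at $x_1,x_2$ that are affected by the change of $\phi(x)$. Those extra conditions contribute: at most two forbidden colors per $x_i$ with $\deg_G(x_i)=3$ (the colors of its two other neighbors), at most one per $x_i$ with $\deg_G(x_i)=2$ (the color of its other neighbor), and none for $\deg_G(x_i)\ge 4$ since in that case the remaining neighbors of $x_i$ already carry three distinct colors under $\phi$. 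The worst case $\deg_G(x_1)=\deg_G(x_2)=3$ forbids at most $3+2+2=7$ colors, so a valid $c$ exists in $L(x)$ of size $8$. Redefining $\phi(x)=c$ keeps $\phi$ a $3$-dynamic $L$-coloring of $H$ with $\phi(x)\neq\phi(y')$, reducing the situation to the easy case and producing the required contradiction. The delicate step is precisely this recoloring: one must verify simultaneously that the dynamic conditions at $x_1$ and $x_2$ in $H$ survive the change of $\phi(x)$ and that the dynamic condition at $y$ in $G$ becomes satisfiable, which is what drives the split by $\deg_G(x_i)$ above.
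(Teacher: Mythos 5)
Your proof is correct and follows essentially the same strategy as the paper's: the paper deletes the edge $xy$ rather than the vertex $y$ and then unconditionally recolors $x$ (avoiding at most $3+3+1=7$ colors, including $\phi(y')$) before coloring $y$ (avoiding at most $6$), which folds your two cases into one. One small inaccuracy in your recoloring step: when $\deg_G(x_i)\ge 4$ it is not true that $N_G(x_i)\setminus\{x\}$ must already carry three distinct colors under $\phi$ (the dynamic condition only guarantees three distinct colors on all of $N(x_i)$, $x$ included), but such an $x_i$ still forbids at most two colors for the new color of $x$, so your worst-case count of $3+2+2=7<|L(x)|$ survives.
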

\begin{proof}
Let $x$ be a $3^-$-vertex and has a 2-neighbor $y$.
Consider $H=G-xy$, deleting the edge $xy$ from $G$.
Then $mad(H)<3$.
Since $G$ is a minimal counterexample and $H$ is smaller than $G$, $H$ is 3-dynamically $L$-colorable.
Thus there is a 3-dynamic coloring $\phi$ of $H$ such that $\phi(a)\in L(a)$ for any $a\in V(H)$.
Then uncolor the vertices  $x$ and $y$.

Note that the number of forbidden colors at $x$ is at most $3+3+1=7$.
Thus $x$ has at least one available color.
We color $x$ first with that color.
Then we recolor $y$, since the number of forbidden colors at $y$ is at most $3+3=6$.
\end{proof}

\begin{lemma} \label{4-2-nbr8} For $k\in\{4,5\}$, any $k$-vertex has at most $(k-2)$ $2$-neighbors.
\end{lemma}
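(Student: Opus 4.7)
The plan is to directly adapt the argument of Lemma~\ref{4-2-nbr7} to the setting of $8$-color lists. Suppose, aiming for a contradiction, that a $k$-vertex $v$ with $k\in\{4,5\}$ has at least $k-1$ many $2$-neighbors $v_1,\ldots,v_{k-1}$; let $v_k$ denote the remaining neighbor of $v$ and, for each $i\in[k-1]$, let $u_i$ be the other neighbor of $v_i$ (which is a $3^+$-vertex by Lemma~\ref{3-2-nbr8}). I would set $H=G-vv_k$, which has fewer edges than $G$ while still satisfying $mad(H)\le mad(G)<3$, and invoke the minimality of $G$ to obtain a $3$-dynamic $L$-coloring $\phi$ of $H$. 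The plan is then to uncolor $v$ and the $v_i$'s and extend $\phi$ to a $3$-dynamic $L$-coloring of $G$.

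For the extension, the count of forbidden colors at $v$ mimics Lemma~\ref{4-2-nbr7}: the dynamic condition at each degree-$2$ vertex $v_i$ forces $\phi(v)\neq\phi(u_i)$, contributing $k-1$ forbidden colors; proper coloring along the restored edge $vv_k$ rules out $\phi(v_k)$; and restoring the dynamic condition at $v_k$ (which now gains $v$ as a neighbor in $G$) excludes at most two further colors drawn from $\phi(N_H(v_k))$. Thus at most $(k-1)+3=k+2\le 7$ colors are forbidden at $v$, and since $|L(v)|\ge 8$, at least one color remains available; color $v$ with it. Next I would recolor each $v_i$: its list excludes $\phi(v)$, $\phi(u_i)$, and at most three additional colors required to maintain the dynamic condition at $u_i$, leaving at least $8-5=3$ available colors.

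Since $k-1\ge 3$, a simple greedy step using these lists ensures $|\phi(N_G(v))|\ge 3$: choose $\phi(v_1)$ freely from its list of $3$, then pick $\phi(v_2)$ avoiding the at most two extra colors $\phi(v_1)$ and $\phi(v_k)$; if $\phi(v_1)\ne\phi(v_k)$ a third distinct color already appears in $\{\phi(v_1),\phi(v_2),\phi(v_k)\}$, and otherwise I would repeat the trick at $v_3$ by picking $\phi(v_3)\notin\{\phi(v_1),\phi(v_2)\}$ to introduce the missing color. The remaining $v_i$'s may be colored arbitrarily from their residual lists. This produces a $3$-dynamic $L$-coloring of $G$, contradicting the choice of $G$. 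The main obstacle I anticipate is verifying the ``$+3$'' budget at $v_k$ and at each $u_i$ across the various degenerate sub-cases (coincident $u_i$'s, $v_k$ of small degree in $H$, or $v_k$ weakly adjacent to some $u_i$); these corner cases dictate the tight count $k+2\le 7$ and explain why the argument succeeds exactly for $k\in\{4,5\}$ with $8$-color lists.
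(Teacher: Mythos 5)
Your proposal is correct and follows essentially the same argument as the paper's proof: delete a single edge at $v$, invoke minimality of $G$ to $3$-dynamically color the smaller graph, then recolor $v$ (at most $(k-1)+3=k+2\le 7$ forbidden colors against lists of size $8$) and greedily recolor its $2$-neighbors, each retaining at least $3$ available colors, so that $v$ sees three distinct colors. The only cosmetic difference is that the paper takes $H=G-vv_1$ (deleting the edge to a $2$-neighbor) rather than $H=G-vv_k$; the counting and the conclusion are identical.
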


\begin{proof} Let $k\in\{4,5\}$ and let
$v$ be a $k$-vertex, and $v_1$, $v_2$, \ldots, $v_k$ be its neighbors.
Suppose that $v$ has at least $(k-1)$ 2-neighbors, $v_1$,  \ldots, $v_{k-1}$ .
Let $H=G-vv_1$.
Then $mad(H)<3$.
Since $G$ is a minimal counterexample and $H$ is smaller than $G$, $H$ is 3-dynamically $L$-colorable.
Thus there is a 3-dynamic 8-coloring $\phi$ of $H$ such that $\phi(a)\in L(a)$ for any $a\in V(H)$.
Then uncolor the vertex $v$ and its all 2-neighbors.

Note that the number of forbidden colors at $v$ is at most $(k-1)+3=k+2\le 7$.
Thus $v$ has at least one available color and we color $v$ first with that color.
Then we recolor each 2-neighbor of $v$ one by one.
Since the number of available colors at each 2-neighbor of $v$ is 3, and so they are colorable so that $v$ has three distinct colored neighbors.
\end{proof}

We use discharging technique.
We define the charge of each vertex $v$ of $G$ by its degree $\deg(v)$. Note that the average charge is less than $3$. Next, we distribute their charges by the following rules, and then we show that  the new charge of each vertex is at least $3$, which leads a contradiction.
The rule is as follows.

\bigskip

\begin{enumerate}
\item[{\bf R1.}] A  $4^+$-vertex gives $\frac{1}{2}$ to its each of 2-neighbors.
\end{enumerate}

Let $d^*(u)$ be the new charge after discharging. We will show  that $d^*(u) \geq 3$ for all $u\in V(G)$.
By Lemma~\ref{pendent:r}-(1), each vertex of $G$ is a $2^+$-vertex.
If $\deg(u) =2$, by Lemma~\ref{3-2-nbr8}, then the neighbors of $u$ are $4^+$-vertices and so $u$ receives $\frac{1}{2}$ from each of its neighbors by {\bf R1} and so
\[ d^*(u) = 2 +\frac{1}{2}+\frac{1}{2} =3.\]
If $\deg(u) =3$ then the charge of $u$ is not changed and so $d^*(u)=\deg(u) =3$.
If $\deg(u)=4$ then by Lemma~\ref{4-2-nbr8}, it has at most two 2-neighbors and so
 \[ d^*(u) \ge 4 -2\times \frac{1}{2}=3.\]
If $\deg(u)=5$ then by Lemma~\ref{4-2-nbr8}, it has at most three 2-neighbors and so
 \[ d^*(u) \ge 5 -3\times \frac{1}{2}>3.\]
If $\deg(u)\ge 6$, then
  \[ d^*(u) \ge \deg(v) -\deg(v)\times \frac{1}{2}\ge \frac{\deg(v)}{2}\ge3.\]

\bigskip

\noindent {\bf Acknowledgement.} We would like to thank the two anonymous reviewers for helpful and valuable comments.


\end{document}